\newcommand{\probP}{\mathbb{P}}
\newcommand{\espE}{\mathbb{E} }
\newcommand{\indic}{ \mathds{1} }
\newtheorem{theorem}{Theorem}[section]
\newtheorem{lemma}[theorem]{Lemma}
\newtheorem{remark}[theorem]{Remark}
\newtheorem{assumption}[theorem]{Assumption}
\journal{Transactions in Automatic and Control}
\newcommand{\VPDE}{V_{PDE}}
\newcommand{\Veff}{V_{eff}}
\newcommand{\VSDE}{V_{SDE}}
\newcommand{\Vin}{V}
\begin{document}

\begin{frontmatter}

\title{Stabilization and Optimal Control of an Interconnected $n + m$ Hetero-directional Hyperbolic PDE - SDE System}

\affiliation[Université Paris-Saclay]{organization={Université Paris-Saclay, CentraleSupélec, CNRS, Laboratoire des signaux et systèmes},
            city={Gif-sur-Yvette},
            country={France}}
\affiliation[Université Paris-Saclay2]{organization={Université Paris-Saclay, CentraleSupélec, INRIA, Laboratoire des signaux et systèmes \& IPSA},
            city={Gif-sur-Yvette},
            country={France}}

\author[Université Paris-Saclay]{Gabriel Velho}
\author[Université Paris-Saclay]{Jean Auriol}
\author[Université Paris-Saclay2]{Islam Boussaada}
\author[Université Paris-Saclay]{Riccardo Bonalli}

\begin{abstract}
In this paper, we design a controller for an interconnected system composed of a linear Stochastic Differential Equation (SDE) controlled through a linear hetero-directional hyperbolic Partial Differential Equation (PDE). Our objective is to steer the coupled system to a desired final state on average, while keeping the variance-in-time as small as possible, improving robustness to disturbances. By employing backstepping techniques, we decouple the original PDE, reformulating the system as an input delayed SDE with a stochastic drift. We first establish a controllability result, shading light on lower bounds for the variance. This shows that the system can never improve variance below strict structural limits. Under standard controllability conditions, we then design a controller that drives the mean of the states while keeping the variance bounded. Finally, we analyze the optimal control problem of variance minimization along the entire trajectory. Under additional controllability assumptions, we prove that the optimal control can achieve any variance level above the fundamental structural limit. 
\end{abstract}

\end{frontmatter}

\section{Introduction}\label{INT}

Interest in interconnected systems combining Ordinary Differential Equations (ODEs) and Partial Differential Equations (PDEs) began when delays in ODEs were associated with transport PDEs, enabling a reinterpretation of the classical Finite Spectrum Assignment \cite{artstein_linear_1982}, as seen in~\cite{krstic_boundary_2008}. Interconnections involving hyperbolic PDEs and ODEs have since been successfully used to model various real-world applications, such as the propagation of torsional waves in drilling systems~\cite{aarsnes_torsional_2018}, control in deep-water construction vessels \cite{stensgaard_subsea_2010}, and heat exchanger systems. Lyapunov and backstepping methods have played a key role in developing stabilizing controllers for such systems \cite{di_meglio_stabilization_2018, auriol_robustification_2023, deutscher_backstepping_2017, wang_delay-compensated_2020}.

In realistic settings, these dynamical systems are frequently subject to disturbances, for instance from measurement noise, parameter uncertainties, or external disturbances \cite{bonalli_sequential_2022}. Such disturbances can substantially impact system dynamics. Therefore, effective mitigation of these uncertainties is critical for the reliability and safety of controlled systems \cite{chen_optimal_2016}. Stochastic Differential Equations (SDEs) provide a broad and accurate framework for modeling uncertain systems \cite{touzi_introduction_2013}. The use of stochastic control enables the design of robust stabilizing controllers that effectively mitigate the impact of random fluctuations. Stabilizing SDEs in expectation is a well-established technique, but ensuring bounded variance remains essential for reliability \cite{chen_optimal_2016}. 

Motivated by the previous remarks, introducing uncertainty modeling in interconnected PDE+ODE offers more robust settings for safe control of the system.
Previous methods have aimed to stabilize PDE+ODE systems in the presence of bounded or model-based disturbances \cite{redaud_output_2024, deutscher_backstepping_2017}. SDEs extend this framework, offering a broader scope for disturbance modeling. Many approaches address SDEs with delays \cite{oksendal_maximum_2001,wang_lq_2013, velho_mean-covariance_2023}. While delays can often be interpreted as transport PDEs, more complex systems with multiple velocities and multiple distinct delays remain less explored. Considering such systems is crucial as they naturally arise in applications such as traffic flow and networked control systems, where different propagation rates lead to complex delay structures that impact stability and performance.
A first step toward such complexity was addressed in \cite{velho_stabilization_2024}, where a SDE was interconnected with a scalar hyperbolic PDE introducing a single delay. However, systems involving higher-dimensional coupled hyperbolic PDEs, which can model a wider range of dynamics \cite{auriol_robust_2022}, still present significant challenges.

This work focuses on the stabilization and variance minimization of the SDE in an interconnected PDE-SDE system, where the PDE component consists of $n+m$ coupled hyperbolic equations. This structure introduces $m$ distinct delays in the system, adding significant complexity to the control design.

In this paper, we build on the methodology previously introduced in \cite{velho_stabilization_2024} for the case of a scalar hyperbolic PDE inducing a single delay, extending it to a multidimensional setting. Our approach first extends techniques for deterministic PDE+ODE control to the stochastic setting, adapting them to PDE+SDE systems. We then introduce a novel approach to analyze and mitigate the effects of potential multiple delays within the SDE, providing implementable and computationally efficient control strategies for these advanced systems.

 Concretely our contribution is threefold:
 \begin{itemize}
     \item We study the controllability of the system through a novel approach, revealing how disturbances and delays jointly affect the variance of the SDE state and establishing a structural lower bound on the achievable variance.
     \item We propose a feedback law yielding stabilization of the PDE+SDE system, steering the expectation of the state to zero while keeping the variance bounded.
     \item We develop an optimal control strategy to minimize the SDE variance over an entire time horizon, improving robustness to disturbances \cite{bonalli_solving_2017,bonalli_optimal_2018,bonalli_continuity_2019}. Under standard controllability assumptions, this strategy can reduce the variance arbitrarily close to the limiting, structural lower bound.
 \end{itemize}
 Our approach leverages the backstepping methodology to transform the coupled PDE-SDE system into a input-delayed SDE with a stochastic drift. To achieve stabilization and variance minimization, we combine the Kalman decomposition technique with Artstein's transformation and Linear Quadratic (LQ) control techniques.

 The paper is organized as follows. 
 In Section \ref{PRO}, we outline the problem formulation. Next, in Section \ref{TRA}, we utilize the backstepping transformation to reduce the problem to controlling an input delayed SDE with a random drift term, while also establishing the well-posedness of the system. In Section \ref{COV}, we study the controllability of the SDE and propose a stabilizing feedback law. Finally, in Section \ref{OPT}, we present an optimal control approach to minimize the variance of the SDE.

\section{Problem Formulation}\label{PRO}

\subsection{Notations}

Let us consider $(N,n,m) \in (\mathbb{N} \setminus \{ 0 \})^3$. We assume that state variables take values in $\mathbb{R}^N$, while control variables take values in $\mathbb{R}^m$. 
We assume that we are given a filtered probability space $(\Omega, \mathcal{F} \triangleq (\mathcal{F}_t)_{t \in [0,\infty)}, \mathbb{P})$, 
and that stochastic perturbations are due to a one-dimensional Wiener process $W_t$, which is adapted to the filtration $\mathcal{F}$. For clarity, we focus on a one-dimensional Wiener process, noting that all results extend trivially to the multidimensional case but with increased notational complexity.
Let $T > 0$ be some given time horizon.
The space the system state evolves in is denoted by  $\mathbb{X} \triangleq \mathbb{R}^N \times L^2([0,1] , \mathbb{R}^n) \times L^2([0,1] , \mathbb{R}^m)$.
We denote by $L_\mathcal{F}^2([0,T] , \mathbb{X})$ the set of square-integrable processes $P: [0,T]\times\Omega \to \mathbb{X}$ that are $\mathcal{F}$--progressively measurable, whereas the subset $C^2_\mathcal{F}([0,T] , \mathbb{X}) \subseteq L_\mathcal{F}^2([0,T] , \mathbb{X})$ contains processes whose sample paths are continuous.
The spaces of semi-definite and definite positive symmetric matrices in $\mathbb{R}^n$ are denoted by $\mathcal{S}^+_n$ and $\mathcal{S}^{++}_n$, respectively. 
If $X \in L_\mathcal{F}^2([0,T] , \mathbb{R}^n)$, we denote by $V_X(\cdot)$ its variance, that is
$
V_X(t) \triangleq \espE[ (X(t) - \espE[X(t)])^T (X(t) - \espE[X(t)]) ] \in \mathbb{R}.
 $ Finally, we recall that if $f_1$ and $f_2$ are two deterministic functions in $L^2([0,T], \mathbb{R})$, It\^o formula yields 
 \small
 \begin{align}
     \espE \left[  \left( \int_0^t f_1(s) dW_s  \right) \left( \int_0^t f_2(s) dW_s  \right) \right] = \int_0^t f_1(s) f_2(s) ds. \label{eq:quadratic_variation_of_stochastic_integral}
 \end{align}
\normalsize

Throughout this paper, we consider equalities between stochastic processes up to a modification. A stochastic process $X$ is a modification of a process $Y$ if $\forall t, \probP(X(t) = Y(t)) = 1$. All the expectations considered in this work are unaffected, as $ \espE[ \int^T_0 h(X(t)) \; \mathrm{d}t ] = \int^T_0 \espE[ h(X(t)) ] \; \mathrm{d}t = \int^T_0 \espE[ h(Y(t)) ] \; \mathrm{d}t = \espE[ \int^T_0 h(Y(t)) \; \mathrm{d}t ]$ for any continuous function $h$ such that $\espE[ h(X(t)) ]$ is finite. This convention is justified as we focus primarily on the expectations of continuous functions of the processes -- expectation, variance, etc. This allows us to simplify expressions without repeatedly noting their equivalence up to stochastic modifications, thus ensuring clarity and conciseness.


\subsection{Equations}

In this paper, we consider interconnected hyperbolic PDE+SDE systems of the form 
\begin{equation}\label{eq:coupled_PDE_SDE}
\left\{
\begin{array}{l}
     dX(t) = ( A X(t) + B v(t,0) ) dt + \sigma(t) dW_t \\
     u_t(t,x) + \Lambda^+ u_x(t,x) = \Sigma^{++}(x) u(t,x) + \Sigma^{+-}(x) v(t,x) \\
     v_t(t,x) - \Lambda^- v_x(t,x) = \Sigma^{-+}(x) u(t,x) + \Sigma^{--}(x) v(t,x) \\
     u(t,0) = Q v(t,0) + M X(t) \\
     v(t,1) = R u(t,1) + \Vin(t) \\
     X(0) = X_0,~ u(0,x) = u_0(x),~ v(0,x) = v_0(x),
\end{array} 
\right.
\end{equation}
in the time-space domain $[0, T] \times [0,1]$, where 
$$
u = (u_1, ... , u_n)^T, \qquad v = (v_1, ... , v_m)^T.
$$
The state of the system is $(X(t), u(t,x), v(t,x)) \in \mathbb{X} \triangleq \mathbb{R}^N \times L^2([0,1] , \mathbb{R}^n) \times L^2([0,1] , \mathbb{R}^m)$. The diagonal matrices $\Lambda^+$ and $\Lambda^-$ are defined as
$$
\Lambda^+ = \textrm{diag} \left( \lambda_1, \dots , \lambda_n \right), \quad \Lambda^- = \textrm{diag} \left(   \mu_1, \dots , \mu_m  \right)
$$
with 
$$
0 < \lambda_1 < ... < \lambda_n, \qquad 0 < \mu_1 < ... < \mu_m.
$$
These transport velocities are assumed to be constant. 
The in-domain coupling terms $\Sigma^{\cdot \cdot}$ are continuous functions. We assume, without loss of generality, that the diagonal entries of $\Sigma^{++}$ and $\Sigma^{--}$ are equal to zero \cite{coron_local_2013}. 
The boundary coupling terms $R$ and $Q$ are such that the open-loop PDE system (i.e. $\Vin = 0$), without the SDE and in the absence of the in-domain coupling terms $\Sigma^{\cdot\cdot}$, is exponentially stable. This condition is required to avoid an infinite number of unstable poles, which is necessary to guarantee the existence of robustness margins for the closed-loop system; see \cite[Theorem 4]{auriol_explicit_2019}.
The matrices $A \in \mathbb{R}^{N\times N}, B \in \mathbb{R}^{N\times m}, M \in \mathbb{R}^{n\times N}$ are constant, and~$\sigma$ is a deterministic diffusion in $L^\infty([0,T], \mathbb{R}^N)$. Finally, $(X_0, u_0, v_0) \in \mathbb{X}$. 
The control input $V$ is a signal in the space $L_{\mathcal{F}}^2([0,T] , \mathbb{R}^m)$.
The class of system~\eqref{eq:coupled_PDE_SDE} naturally appear, for example, when modeling a heat-exchanger connected to a temperature system subject to random perturbations (e.g., the temperature of a building disrupted by a random outside temperature or sunlight).
We make the following classic controllability assumption on the dynamic in order to effectively control the system.
\begin{assumption}\label{ass:controlability_of_the_SDE}
The pair 
$(A, B )$ is controllable.   
\end{assumption}
The well-posedness of the system is proved in section \ref{TRA} after the backstepping change of variable, where we show that the state $(X(\cdot), u(\cdot , \cdot), v(\cdot, \cdot))$ is in $L^2_\mathcal{F}([0,T], \mathbb{X})$






\subsection{Objective}
The system \eqref{eq:coupled_PDE_SDE} naturally contains multiple feedback loops or couplings that can potentially introduce instabilities. Therefore, our objective is three-fold:
\begin{enumerate}
    \item Analyze controllability and variance steering: we establish a structural lower bound on the achievable variance of the SDE state, which depends on the decoupling strategy used to stabilize the PDE but remains valid for any subsequent control signal. Additionally, we show that open-loop controls can steer the variance arbitrarily close to this structural lower bound.
    \item Stabilize the PDE+SDE system with feedback control: Since open-loop controls are generally impractical due to the unobservable noise realizations $ dW_t $ \cite{touzi_introduction_2013}, we give a stabilizing feedback law. This stabilizes the coupled PDE+SDE system by driving the expectation of the state to zero with exponential convergence while keeping the variance bounded. Specifically, for  $x \in [0,1]$ and $t>\frac{1}{\mu_1}$, the state $(X,u,v)$ satisfies:
    \begin{equation}\label{eq:stabilization_mean_expectation}
    \begin{split}
    &  \Vert \espE[X(t)] \Vert + \Vert \espE[u(x,t)] \Vert + \Vert \espE[v(x,t)] \Vert \leq C e^{- \nu t} \times\\
   & \quad \times \bigl (\Vert \espE[X_0] \Vert + \Vert  \espE [u_0] \Vert + \Vert  \espE [v_0] \Vert \bigr ), \\
   &  \espE \left[ \Vert X(t)\Vert^2 \right]  + \espE \left[ \Vert u(x,t) \Vert^2 \right] + \espE \left[ \Vert v(x,t) \Vert^2 \right] \leq C
    \end{split}
    \end{equation}
    with $C>0$ depending on the system parameters, while $\nu$ is defined by the feedback gain.
    
    \item Design a control that minimizes the quadratic cost of the SDE state while penalizing control input at the boundary:
    \begin{align*}
    J(X, v(t,0) ) \triangleq & \ \espE \left[  \int_0^T X(t)^T Q(t) X(t)  \right. \\
    & \hspace{3em} + \left. \int_0^T v(t,0)^T R(t) v(t,0) \right]. 
    \end{align*}
    This approach minimizes the variance, enhancing robustness with respect to perturbations\cite{bonalli_sequential_2022}, and it is computationally feasible. Under standard controllability assumptions, this strategy achieves variances that are arbitrarily close to the theoretical minimum by reducing the penalty parameter $R$ toward zero.
\end{enumerate}


\section{Tracking problem within the PDE}\label{TRA}


In system \eqref{eq:coupled_PDE_SDE}, the in-domain coupling terms $\Sigma^{\cdot\cdot}$ in the PDE may cause instabilities and disrupt the control of the SDE \cite{auriol_delay-robust_2018}. 
Our idea to cope with this challenge is to apply backstepping in the first place to move these coupling terms to the boundary at $x=1$, where the control input is applied. Then, we design the input $V(t)$ as a sum of two control components: $V(t)=\VPDE(t)+\Veff(t)$. The term $\VPDE(t)$ corresponds to a classical backstepping control input,  used to cancel the (potentially) unstable coupling terms appearing at the controlled boundary. It would stabilize the PDE in the absence of the SDE. On the other hand, $\Veff(t)$ is the effective control that propagates through the PDE, allowing us to approximately track a desired signal entering the SDE with a delay. This enables the application of delay stochastic control tools to steer the SDE and minimize its variance.



\subsection{The Backstepping Transformation}

Taking inspiration from \cite{auriol_delay-robust_2018}, we consider the following backstepping change of variables:

\begin{equation}\label{eq:backstepping_transform_alpha_beta_definition}
 \begin{split}
 \alpha(t,x) \triangleq u(t,x) & + \int_0^x K_{uu}(x,y) u(t,y) dy \\ 
 & + \int_0^x K_{uv}(x,y) v(t,y) dy + \gamma_\alpha(x) X(t) \\
 \beta(t,x) \triangleq v(t,x) & + \int_0^x K_{vu}(x,y) u(t,y) dy \\ 
 & + \int_0^x K_{vv}(x,y) v(t,y) dy + \gamma_\beta(x) X(t).
 \end{split}
\end{equation}
The kernels  $K_{uu}, K_{uv}, K_{vu}$, and $K_{vv}$ are piecewise continuous functions defined on the triangular domain $\mathcal{T} \triangleq \left\{ (x,y), x \in [0,1], y \in [0,x]  \right\}$, whereas $\gamma_\alpha, \gamma_\beta$ are in $C^1([0,1] ; \mathbb{R}^{n \times N}) \times C^1([0,1] ; \mathbb{R}^{m \times N})$. On their respective domains of definition, $K$ and $\gamma$ verify the following set of \textit{kernel equations}: 


\small

 \begin{equation}\label{eq:kernel_equations_alpha}
 \left\{
 \begin{array}{l}
      \Lambda^+ (K_{uu})_x(x,y) + (K_{uu})_y(x,y) \Lambda^+ \\ 
      \quad + K_{uu}(x,y) \Sigma^{++}(y) + K_{uv}(x,y) \Sigma^{-+}(y) = \Psi(x) K_{uu}(x,y) \\
      \Lambda^+ (K_{uv})_x(x,y) - (K_{uv})_y(x,y) \Lambda^- \\
      \quad + K_{uu}(x,y) \Sigma^{+-}(y) + K_{uv}(x,y) \Sigma^{--}(y) =  \Psi(x) K_{uv}(x,y) \\
      \Lambda^+  K_{uu}(x,x) - K_{uu}(x,x) \Lambda^+ + \Sigma^{++}(x) = 0 \\
      \Lambda^+  K_{uv}(x,x) + K_{uv}(x,x) \Lambda^- + \Sigma^{+-}(x) = 0 \\
      K_{uu}(x,0) \Lambda^+ Q - K_{uv}(x,0) \Lambda^- + \gamma_\alpha(x) B = \Psi(x)  \\
      \Lambda^+ \gamma_\alpha'(x) + \gamma_\alpha(x) A + K_{uu}(x,0) \Lambda^+ M  = 0 \\
      \gamma_\alpha(0) = -M,
 \end{array} 
 \right.
 \end{equation}
 \normalsize
 as well as
\small
\begin{equation}\label{eq:kernel_equations_beta}
 \left\{
 \begin{array}{l}
      - \Lambda^- (K_{vu})_x(x,y) + (K_{vu})_y(x,y) \Lambda^+ \\
      \quad + K_{vu}(x,y) \Sigma^{++}(y) + K_{vv}(x,y) \Sigma^{-+}(y) = \Omega(x) K_{vu}(x,y) \\
      - \Lambda^- (K_{vv})_x(x,y) - (K_{vv})_y(x,y) \Lambda^- \\
      \quad + K_{vu}(x,y) \Sigma^{+-}(y) + K_{vv}(x,y) \Sigma^{++}(y) = \Omega(x) K_{vv}(x,y) \\
      - \Lambda^-  K_{vu}(x,x) - K_{vu}(x,x) \Lambda^+ + \Sigma^{-+}(x) = 0 \\
      - \Lambda^-  K_{vv}(x,x) + K_{vv}(x,x) \Lambda^- + \Sigma^{--}(x) = \Omega(x) \\
      K_{vu}(x,0) \Lambda^+ Q - K_{vv}(x,0) \Lambda^- + \gamma_\beta(x) B = 0 \\
      - \Lambda^- \gamma_\beta'(x) + \gamma_\beta(x) A  + K_{vu}(x,0) \Lambda^+ M = 0 \\
      \gamma_\beta(0) = 0.
 \end{array} 
 \right.
 \end{equation}
  \normalsize
where $\Psi$ and $\Omega$ are two strict upper triangular matrices. 
Their coefficients verify
\begin{align*}
\psi_{i,j}(x) & =  ( \lambda_i - \lambda_j) K_{uu}(x,x)_{i,j} + \Sigma^{++}_{i,j} , \\
\omega_{i,j}(x) & = ( \mu_j - \mu_i) K_{vv}(x,x)_{i,j} + \Sigma^{--}_{i,j}.
\end{align*}
The set of equations \eqref{eq:kernel_equations_alpha}-\eqref{eq:kernel_equations_beta} is well posed and admits a unique solution \cite{auriol_minimum_2016, di_meglio_stabilization_2018}. 
By setting our stabilizing backstepping controller to
\begin{equation}\label{eq:definition_expression_backstep_controller}
\begin{split}
& \VPDE(t) = - R u(t,1) - \gamma_\beta(1) X(t) \\
    & \ - \int_0^1 K_{vu}(1,y) u(t,y) dy - \int_0^1 K_{vv}(1,y) v(t,y) dy ,
\end{split}
\end{equation}
and by differentiating with respect to time and space (in the sense of distributions), integrating by parts one shows that the states $(\alpha,\beta,X)$ are solutions to the following \textit{target system}:
\begin{equation}\label{eq:first_target_system_SPDE}
\left\{
\begin{array}{l}
     dX(t) = ( A X(t) + B \beta(t,0) ) dt + \sigma(t) dW_t \\
     d \alpha(t,x) + \Lambda^+ \alpha_x(t,x) = \Psi(x)\beta(t,0) + \gamma_\alpha(x) \sigma(t) dW_t \\
     d \beta(t,x) - \Lambda^- \beta_x(t,x) = \Omega(x) \beta(t,x) + \gamma_\beta(x) \sigma(t) dW_t \\
     \alpha(t,0) = Q \beta(t,0), \ \beta(t,1) = \Veff(t) .
\end{array} 
\right.
\end{equation}

In the absence of the SDE, this target system would be exponentially stable. However, due to the cancellation of the term $Ru(t,1)$, the feedback operator induced by $\VPDE$ is not strictly proper. This could lead to (delay)-robustness issues, as highlighted in~\cite{auriol_delay-robust_2018}. To address these concerns, different approaches have been proposed as partial cancellation of the reflection~\cite{auriol_delay-robust_2018} or filtering methods~\cite{auriol_robustification_2023}. However, these methods do not directly adapt to the current stochastic configuration. Yet, we should be capable of showing robust mean square stability, but quantifying the impact of these techniques on variance proves to be challenging. This robustness analysis is beyond the scope of this paper and is left for future research.



While system \eqref{eq:first_target_system_SPDE} is simpler to handle, expressing $\beta(t,0)$ in terms of $\Veff$ remains complex, complicating the task of tracking a desired control through the PDE when plugged into the SDE. In what follows, we show how we can further simplify system   \eqref{eq:first_target_system_SPDE} into a input-delayed SDE, with an arbitrary input $\VSDE \in L^2_\mathcal{F}([0,T] ; \mathbb{R}^m)$, by choosing an appropriate controller $\Veff$.

\subsection{Solving the SPDE Through the Characteristic Method}

To achieve our desired control behavior at the boundary of the SDE, we start by considering an input $\Veff$ applied to the PDE. Due to the coupling terms within the PDE in system \eqref{eq:first_target_system_SPDE}, this input does not directly translate to a simple expression for $\beta(t,0)$. However, we find that, by choosing $\Veff$ in a specific form relative to an arbitrary process $S(t) \in L^2_\mathcal{F}([0,T] ; \mathbb{R}^m)$, we obtain a clean boundary condition at the other end of the PDE: $\beta(t,0) = S(t-\frac{1}{\mu_1}) + \int_{t - \frac{1}{\mu_1}}^t G(t-s ) \sigma(s) dW_s $.
This result implies that any desired control $\VSDE$ can be tracked at $\beta(t,0)$ up to a noise $\int_{t - \frac{1}{\mu_1}}^t G(t-s ) \sigma(s) dW_s $ when choosing $S(t) = \VSDE(t)$. The noise is independent of the control signal and can therefore not be compensated for.
\begin{lemma}\label{lem:track_VSDE_with_Veff_and_formula_beta}
    Let $\VSDE$ be in $ L^2_\mathcal{F}([0,T] ; \mathbb{R}^m)$ . 
    The input $\Veff \in  L^2_\mathcal{F}([0,T] ; \mathbb{R}^m)$, recursively defined as 
    \small
\begin{equation}\label{eq:def_of_Veff_in_terms_of_VSDE}
\begin{split}
\Veff^{(i)}(t) & = \VSDE^{(i)}\left(t + \frac{1}{\mu_i} - \frac{1}{\mu_1} \right) \\
& -  \sum_{j=i+1}^m \int_{\frac{1}{\mu_j}}^{\frac{1}{\mu_i}}  F_{ij}(0,s) \Veff^{(j)}\left(t + \frac{1}{\mu_i} - s\right) ds
\end{split}
\end{equation}
\normalsize
with $F_{ij}$ a sequence of functions defined recursively in the Appendix in \eqref{eq:definition_2_F_multi_Veff} and \eqref{eq:definition_1_F_multi_Veff},
yields the following expression for $\beta(t,0)$:
\begin{equation}\label{eq:induction_hypo_lemma_tracking_delayed_SDE_beta}
\begin{split}
    \beta^{(i)}(t,0) & = \VSDE^{(i)} \left(t - \frac{1}{\mu_1} \right) \\ 
    \quad & + \int_{t - \frac{1}{\mu_1}}^t G_i(0, t-s ) \sigma(s) dW_s
\end{split}
\end{equation}
    where $G_i$ is a deterministic $C^1$ function defined recursively:
\small
\begin{equation}\label{eq:def_of_G_state_feedback_noise}
\begin{split}
    & G_m(x,u) \triangleq \gamma_\beta^{(m)}( x + \mu_m u )   \\
    & G_i(x,u) \triangleq \gamma_\beta^{(i)}(x + \mu_i u) \qquad \qquad i < m \\ 
    & + \sum_{j=i+1}^m \int_0^{\overline{s}_{i,j}(x,u)} \omega_{i,j}\bigl (x + \mu_i (u-s) \bigr) G_j \bigl (x + \mu_i (u-s), s \bigr )   ds ,
\end{split}
\end{equation}
\normalsize
where $\overline{s}_{i,j}(x,u) \triangleq \min \left(  \frac{1-x-\mu_i u}{\mu_j - \mu_i} , u\right)$. 
\end{lemma}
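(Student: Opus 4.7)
The plan is to prove the lemma by backward induction on the component index $i$, proceeding from $i=m$ down to $i=1$ and exploiting the strict upper triangular structure of $\Omega$. The main tool is the method of characteristics applied componentwise to the $\beta$-equation of \eqref{eq:first_target_system_SPDE}. In coordinates this equation reads
\begin{equation*}
d \beta^{(i)}(t,x) - \mu_i \beta^{(i)}_x(t,x)\, dt = \sum_{j > i} \omega_{ij}(x) \beta^{(j)}(t,x)\, dt + \gamma_\beta^{(i)}(x) \sigma(t)\, dW_t,
\end{equation*}
so that the $i$-th line only couples to components of strictly higher index. Integrating along the backward characteristic $s \mapsto x + \mu_i(t-s)$ issued from $(t,x)$, which hits the boundary $x=1$ at time $t - (1-x)/\mu_i$, and using $\beta^{(i)}(s,1) = \Veff^{(i)}(s)$, I will obtain an explicit representation of $\beta^{(i)}(t,x)$ as the sum of a delayed input term, a deterministic integral over the characteristic of the coupling $\sum_{j>i} \omega_{ij} \beta^{(j)}$, and a stochastic integral against $dW$ with integrand $\gamma_\beta^{(i)}(x + \mu_i(t-s))\sigma(s)$.

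For the base case $i = m$ the coupling sum is empty; evaluating at $x=0$ and using \eqref{eq:def_of_Veff_in_terms_of_VSDE} (which for $i=m$ reduces to $\Veff^{(m)}(t) = \VSDE^{(m)}(t + 1/\mu_m - 1/\mu_1)$ since the sum is empty) immediately yields \eqref{eq:induction_hypo_lemma_tracking_delayed_SDE_beta} with $G_m(0, t-s) = \gamma_\beta^{(m)}(\mu_m(t-s))$ on its natural domain $s \in [t - 1/\mu_m, t]$, tacitly extended by zero on $[t - 1/\mu_1, t - 1/\mu_m]$ to match the integration bounds in the statement. For the inductive step, I will substitute the induction hypothesis, applied to $\beta^{(j)}(s,\mu_i(t-s))$ for every $j > i$, into the characteristic representation of $\beta^{(i)}(t,0)$, and split the right-hand side into a deterministic and a stochastic component. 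On the deterministic side the delayed inputs $\Veff^{(j)}$ for $j > i$ appear as cross-terms of the form $\int \omega_{ij}(\cdot) \Veff^{(j)}(\cdot)\, ds$, and the recursion \eqref{eq:def_of_Veff_in_terms_of_VSDE}, with its auxiliary kernels $F_{ij}$, is designed precisely so that these cross-terms cancel and leave only the single summand $\VSDE^{(i)}(t - 1/\mu_1)$. On the stochastic side, a stochastic Fubini argument---available here because the diffusion $\sigma$ is deterministic and the driving noise is one-dimensional, so that the integrands commute with the deterministic $ds$-integration---allows me to swap the $ds$ integral coming from the characteristic with the $dW$ integrals coming from the induction hypothesis, collapsing everything into a single $dW$-integral whose kernel is, by inspection, the recursive expression $G_i(0, t-s)$ of \eqref{eq:def_of_G_state_feedback_noise}.

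The main obstacle will lie in the bookkeeping of this inductive step rather than in any conceptually novel argument. The characteristic for $\beta^{(i)}$ samples $\beta^{(j)}$ at the point $(s, \mu_i(t-s))$, and plugging this into the induction hypothesis produces nested integrals whose domains depend jointly on $x$, $u$, $\mu_i$ and $\mu_j$; carrying out the required change of variables and order swap is exactly what generates the cutoff $\overline{s}_{ij}(x,u) = \min((1 - x - \mu_i u)/(\mu_j - \mu_i),\, u)$ appearing in \eqref{eq:def_of_G_state_feedback_noise} and the deterministic kernels $F_{ij}$. The manipulations are routine but notationally dense, which is presumably why the explicit expressions for $F_{ij}$ are deferred to the appendix; the role of the lemma is then to package the result of this computation into the clean delayed-plus-noise form \eqref{eq:induction_hypo_lemma_tracking_delayed_SDE_beta} that will feed directly into the stochastic control analysis of the next section.
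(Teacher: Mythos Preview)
Your overall strategy---backward induction on $i$ exploiting the strict upper-triangularity of $\Omega$, componentwise characteristics, and a stochastic Fubini swap---is exactly the paper's. There is, however, a genuine gap in the induction scheme as you have stated it.

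You propose to use \eqref{eq:induction_hypo_lemma_tracking_delayed_SDE_beta} itself as the induction hypothesis, but that formula only gives $\beta^{(j)}(t,0)$. In the inductive step for index $i$, the characteristic representation of $\beta^{(i)}(t,0)$ requires $\beta^{(j)}\bigl(s,\mu_i(t-s)\bigr)$ at \emph{interior} spatial points $\mu_i(t-s)\in(0,1)$, and knowing only the trace $\beta^{(j)}(\cdot,0)$ does not recover these values without re-solving the $j$-th SPDE. So the induction, as written, does not close.

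The paper fixes this by strengthening the induction hypothesis: it proves by induction a representation of $\beta_i(t,x)$ valid at \emph{every} $x\in[0,1]$, expressed not in terms of $\VSDE$ but in terms of the opposite boundary data $\beta_j(\cdot,1)=\Veff^{(j)}$ for $j\geq i$, plus a stochastic integral with kernel $G_i(x,\cdot)$. This is the intermediate identity \eqref{eq:formula_for_beta_induction_new} in the appendix, and it is the reason both $G_i(x,u)$ and $F_{ij}(x,s)$ carry the spatial argument $x$. Only \emph{after} this general-$x$ formula is established does one specialize to $x=0$ and invoke the recursive definition \eqref{eq:def_of_Veff_in_terms_of_VSDE} of $\Veff^{(i)}$---which is precisely engineered to cancel the $\sum_{j>i}\int F_{ij}(0,s)\,\Veff^{(j)}(\cdot)\,ds$ term---to arrive at \eqref{eq:induction_hypo_lemma_tracking_delayed_SDE_beta}. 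Your closing remark that the nested-integral domains depend on $x$ suggests you already sense this; you just need to make the stronger hypothesis explicit and carry the spatial variable through the induction.
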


\begin{proof}
Inspired by the proof of \cite{hu_control_2016}, since $\Omega$ is strictly upper triangular, we proceed by induction. We first obtain an explicit expression for $\beta_m(t,x)$. Then, for any $i \in [1,m-1]$, we give a formula that expresses $\beta_i(t,x)$ in terms of the other $\beta_j$, $j \in [i+1,m]$. This allows us to recursively compute $\beta(t,x)$, and then to compute a controller $\Veff$ that tracks the desired control $\VSDE$. 
The details of the proof can be found in Appendix \ref{APP_LEM_TRACK_NEW}.
\end{proof}
In expression \eqref{eq:induction_hypo_lemma_tracking_delayed_SDE_beta}, we cannot perfectly track the desired control $\VSDE$; it is affected by both a delay and a stochastic noise term. 
The delay arises from the finite propagation speed of the signal through the transport PDE, while the noise results from the SDE state re-entering the PDE at the boundary $x=0$.
The noise cannot be fully compensated, as it depends on a stochastic integral on $\left[t - \frac{1}{\mu_1}, t \right]$, which is independent from $\VSDE\left(t - \frac{1}{\mu_1} \right)$.


\begin{remark}
Setting the boundary condition $\beta_i(t,0) = \VSDE^{(i)}\left(t- \frac{1}{\mu_i} \right) + \int_{t - \frac{1-x}{\mu_i}}^t G_i(0, t-s ) \sigma(s) dW_s$ would enhance performances of the controller, as it would reduce the input delay in the SDE in some components. However, it introduces an additional term in the other components $j<i$, of the form
$$
\beta_j(t,0) = \VSDE^{(j)}\left(t - \frac{1}{\mu_1}\right) + \int_{\frac{1}{\mu_i}}^{\frac{1}{\mu_1}} N_{ij}(s) \VSDE^{(i)}(t-s) ds + \dots
$$
This additional term cannot be compensated without predicting future values of the signal $\VSDE$. Consequently, such tracking would lead to an equivalent input-delayed SDE with distributed input delays. To the best of our knowledge, there are no established results on the control of SDEs with distributed input delays. Therefore, we track the signal $\VSDE^{(i)}\left(t - \frac{1}{\mu_1}\right)$ uniformly across all components. This approach allows us to develop an efficient stabilizing controller for the interconnected system. 
The study of input-delayed SDEs with distributed input delays is left for future work.
\end{remark}

\subsection{Well-Posedness and Regularity of the Solution}

Using the explicit expressions for $\alpha$ and $\beta$ given in \eqref{eq:induction_hypo_lemma_tracking_delayed_SDE_beta}, we can now prove the well-posedness of the target system \eqref{eq:first_target_system_SPDE} directly. 
\begin{lemma}\label{lem:well_posed_SPDE_L2}
    If $\VSDE$ is in $C^2_\mathcal{F}([0,T]; \mathbb{R}^m)$, and $\Veff$ is defined accordingly by \eqref{eq:def_of_Veff_in_terms_of_VSDE}, 
    then the closed-loop target system~\eqref{eq:first_target_system_SPDE} is well-posed, that is there exist unique processes 
    $t \mapsto \alpha(t,\cdot)$ and $t \mapsto \beta(t,\cdot)$ in $C^2_\mathcal{F}([0,T] ; L^2(0,1))$ and a unique process $X \in C^2_\mathcal{F}([0,T] ; \mathbb{R}^n)$ that satisfy \eqref{eq:first_target_system_SPDE}.
\end{lemma}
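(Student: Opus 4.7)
The approach is to exploit the cascade structure of the target system \eqref{eq:first_target_system_SPDE} together with the explicit representations already supplied by Lemma \ref{lem:track_VSDE_with_Veff_and_formula_beta}, and to solve each component of \eqref{eq:first_target_system_SPDE} by an explicit construction (variation of constants for the SDE, characteristics for the two transport equations). Since the equations are linear with adapted and square-integrable data, uniqueness at each stage will be automatic, so the real work lies in building a solution with the prescribed regularity.

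First, since $\VSDE\in C^2_{\mathcal F}([0,T],\mathbb{R}^m)$, formula \eqref{eq:def_of_Veff_in_terms_of_VSDE} expresses $\Veff$ as a finite sum of time-shifts and deterministic convolutions of $\VSDE$, hence $\Veff\in C^2_{\mathcal F}([0,T],\mathbb{R}^m)$. Likewise, \eqref{eq:induction_hypo_lemma_tracking_delayed_SDE_beta} writes $\beta(\cdot,0)$ as a time-shift of $\VSDE$ plus a stochastic convolution with deterministic bounded kernel $G_i(0,\cdot)\sigma(\cdot)$; It\^o's isometry gives the $L^2$ bound and the Kolmogorov/Doob continuity criterion gives the continuity of the paths, so $\beta(\cdot,0)\in C^2_{\mathcal F}([0,T],\mathbb{R}^m)$. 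With $\beta(\cdot,0)$ now a fixed, adapted, square-integrable forcing, the first line of \eqref{eq:first_target_system_SPDE} is a standard linear SDE, solved by the variation of constants formula
\begin{equation*}
X(t)=e^{At}X_0+\int_0^t e^{A(t-s)}B\beta(s,0)\,ds+\int_0^t e^{A(t-s)}\sigma(s)\,dW_s,
\end{equation*}
which yields a unique process in $C^2_{\mathcal F}([0,T],\mathbb{R}^N)$ by the same isometry/martingale arguments.

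Next I solve the $\beta$ equation by the method of characteristics, leveraging the fact that $\Omega$ is strictly upper triangular to decouple the $m$ components recursively from $i=m$ down to $i=1$ (this is exactly the structure already used to derive \eqref{eq:induction_hypo_lemma_tracking_delayed_SDE_beta}). For $\beta_m$, the right-hand side collapses to the stochastic forcing $\gamma_\beta^{(m)}(x)\sigma(t)dW_t$, and integration along the characteristic $s\mapsto x+\mu_m(t-s)$ with boundary data $\beta_m(t,1)=\Veff^{(m)}(t)$ yields an explicit It\^o-type formula. For $i<m$, the forcing contains in addition terms in $\beta_j$ with $j>i$, already constructed, and the same procedure applies. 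The $\alpha$ equation is then a transport equation with source $\Psi(x)\beta(t,0)+\gamma_\alpha(x)\sigma(t)dW_t$ and upstream boundary condition $\alpha(t,0)=Q\beta(t,0)$, which is solved by characteristics in the forward direction. In each case a stochastic Fubini argument exchanges the spatial integration against $dx$ with the stochastic integral against $dW_s$, and It\^o's isometry together with boundedness of the kernels $K_{\cdot\cdot}$, $\gamma_\cdot$, $\Psi$, $\Omega$ and of $\sigma$ produces the uniform bound $\sup_{t\in[0,T]}\espE\,\Vert\beta(t,\cdot)\Vert_{L^2}^2<\infty$ and similarly for $\alpha$.

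The main obstacle I anticipate is not the construction itself but the verification that the explicit formulas thus obtained define processes with values in $L^2(0,1)$ having continuous sample paths in that norm. Pointwise-in-$x$ continuity of the stochastic convolutions follows from standard martingale inequalities, but one must ensure that this continuity transfers to the $L^2(0,1)$-norm topology. This is handled by a dominated-convergence argument applied to $\espE\Vert\beta(t+h,\cdot)-\beta(t,\cdot)\Vert_{L^2}^2$, using the uniform bounds on the kernels and It\^o's isometry to majorize the integrand by an $L^1_x$ function independent of $h$. Uniqueness at each stage follows from the linearity of the equations, completing the proof.
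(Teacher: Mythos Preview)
Your proposal is correct and follows essentially the same route as the paper: the paper's own proof is a one-line reference to \cite[Lemma~1]{velho_stabilization_2024} for the scalar case $n=m=1$, and what you have written is precisely the natural extension of that argument to the $n+m$ setting---exploit the cascade structure (solve $\beta$ by characteristics using the strict upper-triangularity of $\Omega$, then the linear SDE for $X$ by variation of constants, then $\alpha$ by characteristics), and control the $L^2(0,1)$-valued regularity via It\^o's isometry and stochastic Fubini. In that sense you have spelled out what the paper leaves implicit.
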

\begin{proof}
    The proof is identical to the one proposed in~\cite[Lemma 1]{velho_stabilization_2024} for $n=m=1$.
\end{proof}
Since the backstepping transformation is a Volterra transformation, it is boundedly invertible \cite{yoshida_lectures_1960}. This implies the well-posedness of the original system~\eqref{eq:coupled_PDE_SDE}.

\subsection{A delayed SDE}

Lemma \ref{lem:track_VSDE_with_Veff_and_formula_beta} states that for any adapted control $\VSDE$, there exists a controller $\Veff$ such that
$$
\beta(t,0) = \VSDE \left(t - \frac{1}{\mu_1} \right) + \int_{t-\frac{1}{\mu_1}}^t G(t-s) \sigma(s) dW_s
$$
Consequently, 
the process $X(t)$ defined in equation~\eqref{eq:coupled_PDE_SDE} is the solution of the following input-delayed stochastic differential equation with random coefficients 
\begin{equation}\label{eq:equivalent_delayed_SDE}
\left\{
\begin{array}{l}
     dX(t) = \left ( A X(t) + B U(t-h) + r(t) \right ) dt + \sigma(t) dW_t  \\
     X(0) = X_0, \quad U(s) = \beta(0, 1 + \mu s) \  \forall s \in [-h,0)
\end{array} 
\right.
\end{equation}
where $h \triangleq \frac{1}{\mu_{1}}$, $U \triangleq \VSDE$, and 
$$
r(t) \triangleq B \int_{t-h}^t G(t-s) \sigma(s) dW_s.
$$

\begin{remark}
In certain classes of $n+m$ hyperbolic transport PDEs, the target system %
 \eqref{eq:first_target_system_SPDE} is such that the matrix $\Omega(x)$ is null for all $x$. This occurs, for instance, when there are no couplings between the transport PDEs. In this system, there are no interactions between the components of $\beta(t,x)$:
$$
\beta^{(i)}(t,0) = \Veff^{(i)} \left(t - \frac{1}{\mu_i} \right) + \int_{t-\frac{1}{\mu_i}}^t G_i(t-s) \sigma(s) dW_s.
$$
In such setting, we are essentially tracking a control signal at the boundary $\beta(t,0)$ in minimal time, which allows a more precise control of the interconnected PDE-SDE system. The resulting input-delayed SDE is of the form
\small
\begin{equation}\label{eq:equivalent_multi_delayed_SDE}
\left\{
\begin{array}{l}
     dX(t) = \left ( A X(t) + \sum_{i=1}^m B_i U_i(t-h_i) + r(t) \right ) dt + \sigma(t) dW_t  \\
     X(0) = X_0, \quad U(s) = \beta(0, 1 + \mu s) \  \forall s \in [-h,0) ,
\end{array} 
\right.
\end{equation}
\normalsize
where $h_i \triangleq \frac{1}{\mu_{m-i}}$, $U_i \triangleq \Veff^{(m-i)}$, and 
$$
r(t) \triangleq B \int_{t-h_m}^t G(t-s) \sigma(s) dW_s.
$$
Note that the components of $\Veff$ have been reversed to respect the convention $h_1 < ... < h_m$. 
Such minimal time tracking of the control signal enables to further decrease the variance of the SDE, yielding a more efficient control as we detail in the next section.
\end{remark}
From now on, we focus on steering the multi-input delayed SDE~\eqref{eq:equivalent_multi_delayed_SDE} while minimizing the state variance. We focus on this more general multi-input delayed system given that the results developed in the next section can be applied to systems with only one delay as a particular case. We follow a methodology similar to the one outlined in \cite{velho_mean-covariance_2023}. However, compared to~\cite{velho_mean-covariance_2023}, the additional delays $h_i$ that appear in equation~\eqref{eq:equivalent_delayed_SDE} require several adjustments in our variance-minimization approach, given that the computation of Artstein's predictor becomes more complex.


\section{Covariance Steering of the SDE}\label{COV}

Our goal now is to minimize the system's covariance as effectively as possible. Following the approach outlined in \cite{velho_mean-covariance_2023}, we first establish a fundamental lower bound on the system's covariance. 


\subsection{Minimal variance computation}

The minimal bound for covariance can be computed easily when there is a unique delay, primarily using Artstein's predictor \cite{artstein_linear_1982, velho_stabilization_2024}. 
However, under multiple delays, computing this bound becomes more complex as it is influenced by the range of action of each component of the controller on the system state. To address this, we employ the Kalman decomposition \cite{dahleh_mit_2011} to change variables in a way that clearly separates the subspaces controlled by each input component. We then use Artstein's predictors on each subspace, taking into account the corresponding delay. 

\begin{theorem}[Kalman canonical form extended]\label{thm:Kalman_canonical_extended}
For any control $U \in L^2_\mathcal{F}([0,T]; \mathbb{R}^m)$, let $X$ be the solution of \eqref{eq:equivalent_delayed_SDE},  
Then, under assumption \ref{ass:controlability_of_the_SDE}, there exists an invertible matrix $M$ and a change of variable $Z = MX$ such that $Z$ follows the dynamic 
$$
dZ = \left( \overline{A} Z + \overline{B}\left( \begin{array}{c} U_1(t-h_1) \\ \vdots \\ U_m(t-h_m) \end{array} \right) + \overline{r}(t) \right) dt + \overline{\sigma} dW_t
$$
where $\overline{A}$ and $\overline{B}$ are triangular-by-block matrices given by $\overline{A} \triangleq M A M^{-1}$ and $\overline{B} \triangleq M B$, and where $\overline{\sigma} \triangleq M \sigma$, $\overline{r} \triangleq \int_{t-h}^t \overline{G}(t-s) \overline{\sigma}(s) dW_s $ and $\overline{G}(u) \triangleq M B G(u) M^{-1}$. 
We can rewrite the system 
as
\begin{equation}\label{eq:multi_input_delayed_SDE_kalman_form}
\begin{split}
    & \left( \begin{array}{c} dZ_1(t) \\ \vdots \\ dZ_m(t) \end{array} \right) = \left( \begin{array}{ccc} \overline{A}_{11} & &  \overline{A}_{ij} \\  & \ddots & \\ (0) & & \overline{A}_{mm} \end{array} \right)  \left( \begin{array}{c} Z_1(t) \\ \vdots \\ Z_m(t) \end{array} \right) dt \\ 
    & \hspace{3em} + \left( \begin{array}{ccc} \overline{B}_{11} & & \overline{B}_{ij} \\  & \ddots & \\ (0) & & \overline{B}_{mm} \end{array} \right) \left( \begin{array}{c} U_1(t-h_1) \\ \vdots \\ U_m(t-h_m) \end{array} \right) dt \\
    & \hspace{3em} +  \left( \begin{array}{c} \overline{r}_1(t) \\ \vdots \\  \overline{r}_m(t) \end{array} \right) dt + \left( \begin{array}{c} \overline{\sigma}_1(t) \\ \vdots \\  \overline{\sigma}_m(t) \end{array} \right) dW_t
\end{split}
\end{equation}
where $(\overline{A}_{ii} , \overline{B}_{ii})$ are controllable for all $i \in [1,m]$.
\end{theorem}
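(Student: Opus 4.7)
The plan is to extend the standard Kalman controllability decomposition to the multi-input case, where the state space is filtered by nested reachability subspaces associated with each column of $B$. The formulas for $\overline{A}$, $\overline{B}$, $\overline{\sigma}$ and $\overline{G}$ in terms of $M$ follow immediately from the linearity of the change of variables $Z = MX$, so the substance of the theorem is the purely algebraic construction of $M$ that produces the block upper triangular structure with controllable diagonal pairs.

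To construct $M$, I would first build a chain of $A$-invariant subspaces. Let $B_i$ denote the $i$-th column of $B$, and define
\begin{equation*}
V_k \triangleq \mathrm{span}\{A^j B_i : j \geq 0,\ 1 \leq i \leq k\}, \quad V_0 \triangleq \{0\}.
\end{equation*}
The chain $V_0 \subseteq V_1 \subseteq \cdots \subseteq V_m = \mathbb{R}^N$ holds trivially; the final equality is exactly Assumption \ref{ass:controlability_of_the_SDE} combined with the Cayley-Hamilton theorem, which also guarantees $A$-invariance of each $V_k$. Setting $d_k \triangleq \dim V_k$, I would choose a basis $\{e_1, \ldots, e_N\}$ of $\mathbb{R}^N$ such that $\{e_1, \ldots, e_{d_k}\}$ spans $V_k$ for every $k$, and let $M$ be the corresponding change-of-basis matrix. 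Because $A V_k \subseteq V_k$, the transformed matrix $\overline{A}$ is block upper triangular with block sizes $d_k - d_{k-1}$; because $B_i \in V_i$ by construction, $\overline{B}$ inherits the same block upper triangular form.

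The final and most substantive step is to prove that each diagonal pair $(\overline{A}_{kk}, \overline{B}_{kk})$ is controllable. The cleanest route is to pass to the quotient $V_k / V_{k-1}$: since $V_{k-1}$ is $A$-invariant, $A$ descends to a linear map whose matrix in the chosen basis is exactly $\overline{A}_{kk}$, the columns $B_i$ for $i < k$ descend to zero (they lie in $V_{k-1}$), and $B_k$ descends to $\overline{B}_{kk}$. Any representative of $V_k$ can be written as $\sum_{i \leq k,\, j} c_{ij} A^j B_i$; modding out by $V_{k-1}$ kills all $i < k$ contributions, leaving $V_k / V_{k-1} = \mathrm{span}\{\overline{A}_{kk}^j \overline{B}_{kk} : j \geq 0\}$, which is exactly the Kalman rank criterion applied to the $k$-th diagonal block.

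The main obstacle is conceptual rather than computational: one must select the filtration so that $A$-invariance and the column-wise containment $B_i \in V_i$ hold simultaneously, as these two facts together force the desired block upper triangular structure. Once this filtration is in place, the quotient argument in the last step is a direct application of the Kalman rank test, and the transformation of the noise and delay-noise coefficients is immediate from the linearity of $M$, so I do not anticipate any technical subtlety beyond the careful bookkeeping of the basis ordering.
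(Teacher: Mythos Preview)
Your proof is correct and takes a genuinely different route from the paper's. The paper argues by induction on $m$: it applies the classical single-input Kalman decomposition to the pair $(A, B_1)$, obtaining a first change of basis $M_1$ that isolates the reachable subspace of $B_1$; controllability of $(A, B)$ then forces the complementary subsystem to be controllable with the remaining inputs $U_2, \ldots, U_m$, and the induction hypothesis furnishes a second transformation $M_2$ on that smaller block, with $M$ the composite. Your construction bypasses the induction by building the full flag $V_0 \subset V_1 \subset \cdots \subset V_m = \mathbb{R}^N$ at once and reading off both the block upper triangular structure (from $A$-invariance of each $V_k$ together with $B_k \in V_k$) and the controllability of the diagonal pairs (from the quotient $V_k / V_{k-1}$ being spanned by $\overline{A}_{kk}^j \overline{B}_{kk}$). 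The two decompositions coincide, since the paper's first controllable block is exactly $V_1$ and the recursion on the quotient reproduces $V_k / V_{k-1}$ at each step. What each buys: the paper's inductive argument is a repeated black-box call to the standard Kalman decomposition, so it is immediately accessible and mirrors an algorithmic implementation; your filtration argument is cleaner linear algebra, making transparent in one shot why the structure appears and avoiding the bookkeeping of composing successive similarity transformations. One minor point you handle implicitly but do not state: if $V_k = V_{k-1}$ for some $k$, the $k$-th diagonal block is empty and the corresponding input is redundant; the paper flags this explicitly in the remark following the theorem.
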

\begin{proof}
    The proof is provided in Appendix \ref{APP_THM_KALMANN}. 
\end{proof}
\begin{remark}
    Note that, in this new triangular-by-block system, there could be fewer than $m$ rows. 
    For instance, if $(A, (B_1, B_2) ) $ is controllable, a possible transform 
    may lead to 
\begin{align*}
& dZ = \left( \begin{array}{cc} \overline{A}_{11} & \overline{A}_{12} \\ 0 & \overline{A}_{22} \end{array} \right) Z ~ dt \\
&  + \left( \begin{array}{cc} \overline{B}_{11} & \overline{B}_{12} \\ 0 & \overline{B}_{22} \end{array} \right) \left( \begin{array}{c} U_1(t-h_1) \\ U_2(t-h_2) \end{array} \right) dt   + \overline{r}(t) dt + \overline{\sigma}(t) dW_t.
\end{align*}
In this case, setting the controls $U_i$ to zero for $i \geq 3$ would lead to the case where $m=2$ 
\end{remark}
In what follows, we denote by $N_1, ..., N_m$ the respective dimensions of $Z_1, ..., Z_m $. We consider, without loss of generality, that all $m$ matrices are needed for controllability, meaning that $N_i \geq 1$ for all $i$.

In order to compute the structural minimal covariance bound, we now use Artstein's predictor~\cite{artstein_linear_1982} associated with each substate $Z_i$ and its corresponding delay $h_i$. For all $1 \leq i \leq m$ the dynamic of $Z_i$ is given by
\begin{equation}\label{eq:delayed_SDE_substate_Z}
\begin{split}
dZ_i(t) & = \left( \overline{A}_{ii} Z_i(t) + \overline{B}_{ii} U_i(t-h_i) \right)dt \\
& + \left( \sum_{j > i} \overline{A}_{ij} Z_j(t) + \sum_{j > i} \overline{B}_{ij} U_j(t-h_j) \right)dt \\
& + \overline{r}_i(t) dt + \overline{\sigma}_i(t) dW_t.
\end{split}
\end{equation}
The associated predictor $Y_i$ is given by
\begin{equation}\label{eq:Def_Artstein_predictor_substate_Zi}
\begin{split}
Y_i(t) = Z_i(t) + \int_{t-h_i}^t e^{ \overline{A}_{ii} (t-s - h_i)} \overline{B}_{ii} U_i(s) ds ,
\end{split}
\end{equation}
which satisfies the following SDE
\begin{equation}\label{eq:SDE_Artstein_predictor_substate_Zi}
\begin{split}
dY_i(t) & = \left( \overline{A}_{ii} Y_i(t) + \Tilde{B}_{ii} U_i(t) \right)dt \\
& + \left( \sum_{j > i} \overline{A}_{ij} Z_j(t) + \sum_{j > i} \overline{B}_{ij} U_j(t-h_j) \right)dt \\
& + \overline{r}_i(t) dt + \overline{\sigma}_i(t) dW_t,
\end{split}
\end{equation}
where $\Tilde{B}_{ii} \triangleq e^{ - \overline{A}_{ii} h_i} \overline{B}_{ii} $. Let us now introduce the process $\Tilde{Y}_i$ defined by 
\begin{equation}\label{eq:cropped_SDE_Artstein_predictor_tilde_Yi}
\begin{split}
d\Tilde{Y}_i(t) & = \left( \overline{A}_{ii} \Tilde{Y}_i(t) + \Tilde{B}_{ii} U_i(t) \right)dt + \overline{\sigma}_i(t) dW_t.
\end{split}
\end{equation}
Classical results in stochastic control \cite[Theorem 13]{mahmudov_controllability_2001} ensure that the mean and covariance of this process are controllable over a finite time horizon. This process serves as benchmark for determining which covariances are reachable and which are not. 
Additionally, we can establish a clear connection between the original substate $Z_i$ and $\Tilde{Y}_i$.

\begin{lemma}\label{lem:link_substate_cropped_predictor_tilde_Yi}
For $t > h_m$, and all $1 \leq i \leq m$, we have
\small
\begin{equation}\label{eq:link_substate_cropped_predictor_tilde_Yi}
\begin{split}
& Z_i(t) = e^{ \overline{A}_{ii} h_i } \Tilde{Y}_i(t-h_i) + \int_{t-h_i}^t  e^{ \overline{A}_{ii} (t-s) } \overline{\sigma}_i(s) dW_s \\
& + \int_{0}^t e^{ \overline{A}_{ii} (t-s) } \left( \sum_{j > i} \overline{A}_{ij} Z_j(s) + \sum_{j > i} \overline{B}_{ij} U_j(s-h_j) + \overline{r}_i(s) \right)ds.
\end{split}
\end{equation}
\normalsize
\end{lemma}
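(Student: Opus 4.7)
The plan is to establish the identity by combining two applications of the Duhamel variation-of-constants formula, one for $Z_i$ and one for $\Tilde{Y}_i$, and then reconciling the two representations via a time shift.

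First, since $\overline{A}_{ii}$ is constant, I would apply Duhamel directly to \eqref{eq:delayed_SDE_substate_Z} to represent $Z_i(t)$ as the free response $e^{\overline{A}_{ii} t} Z_i(0)$ plus convolutions of $e^{\overline{A}_{ii}(t-s)}$ against the delayed control $\overline{B}_{ii} U_i(s - h_i)$, the coupling/drift terms $\sum_{j>i}\overline{A}_{ij} Z_j + \sum_{j>i}\overline{B}_{ij} U_j(\cdot - h_j) + \overline{r}_i$, and the diffusion $\overline{\sigma}_i\, dW_s$, all integrated over $[0, t]$. In parallel, I would apply Duhamel to \eqref{eq:cropped_SDE_Artstein_predictor_tilde_Yi} at time $t - h_i$ and then multiply the resulting identity by $e^{\overline{A}_{ii} h_i}$. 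Using $\Tilde{B}_{ii} = e^{-\overline{A}_{ii} h_i} \overline{B}_{ii}$ together with the substitution $s \mapsto s + h_i$ in the control integral rewrites that term as $\int_{h_i}^t e^{\overline{A}_{ii}(t-s)} \overline{B}_{ii} U_i(s - h_i)\, ds$, while the stochastic integral takes the form $\int_0^{t-h_i} e^{\overline{A}_{ii}(t-s)} \overline{\sigma}_i(s)\, dW_s$, with the same kernel as in the $Z_i$ formula.

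Second, subtracting the two representations makes the coupling/drift integral on $[0,t]$ and the diffusion integral on $[t - h_i, t]$ appear exactly as in the right-hand side of \eqref{eq:link_substate_cropped_predictor_tilde_Yi}. What remains is the discrepancy $e^{\overline{A}_{ii} t}[Z_i(0) - \Tilde{Y}_i(0)] + \int_0^{h_i} e^{\overline{A}_{ii}(t-s)} \overline{B}_{ii} U_i(s - h_i)\, ds$. Taking $\Tilde{Y}_i(0)$ equal to the Artstein-predictor value $Y_i(0) = Z_i(0) + \int_{-h_i}^0 e^{-\overline{A}_{ii}(s + h_i)} \overline{B}_{ii} U_i(s)\, ds$ built from the history $U_i|_{[-h_i, 0]}$ prescribed in the initial data of \eqref{eq:equivalent_delayed_SDE}, and applying the substitution $s \mapsto s - h_i$ in the remaining control integral, these two contributions cancel, yielding the claim.

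The main obstacle is the bookkeeping of initial conditions: although \eqref{eq:cropped_SDE_Artstein_predictor_tilde_Yi} does not specify its starting value explicitly, matching the two variation-of-constants expressions forces $\Tilde{Y}_i(0)$ to coincide with the Artstein-predictor starting point $Y_i(0)$, so that the free-response terms and the ``history portion'' of the control contribution collapse consistently. The hypothesis $t > h_m$ guarantees in addition that every delayed control $U_j(\cdot - h_j)$ with $j > i$ appearing inside the coupling integral is evaluated on $[0, t - h_j]$ rather than on the history segment, so no extra initial-data correction enters for the off-diagonal components.
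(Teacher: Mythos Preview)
Your proof is correct and is essentially the same variation-of-constants argument as the paper's: the paper factors the computation through the Artstein predictor $Y_i$ (first relating $Z_i$ to $Y_i$ via a cited identity, then $Y_i$ to $\Tilde{Y}_i$ by It\^o's formula, which forces $\Tilde{Y}_i(0)=Y_i(0)$ exactly as you identified), whereas you apply Duhamel directly to $Z_i$ and to $\Tilde{Y}_i$ and subtract. One minor inaccuracy in your final paragraph: the hypothesis $t>h_m$ does not keep the off-diagonal delayed inputs $U_j(\cdot-h_j)$ away from the history segment---inside $\int_0^t(\cdots)U_j(s-h_j)\,ds$ the argument $s-h_j$ still ranges over $[-h_j,t-h_j]$---it merely ensures $t-h_i>0$ so that $\Tilde{Y}_i(t-h_i)$ is defined; this does not affect the validity of your argument.
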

\begin{proof}
    The proof is given in Appendix \ref{APP_LEM_LINK_SUBSTATE}. 
\end{proof}

The previous lemma is not sufficient to compute the minimal covariance of $Z_i$. The additional delays introduce terms like  $\int_{0}^t e^{ \overline{A}_{ii} (t-s) } \left( \sum_{j > i} \overline{A}_{ij} Z_j(s) \right) ds$ which hinders a direct separation of $Z_i$  into a controllable and an independent, non-controllable term. However, by leveraging the triangular structure of the dynamics, we can perform this separation recursively, starting with $Z_m$.
\begin{theorem}\label{thm:substate_separation_for_min_cov_induction}
    Let $t > h_m$, and all $1 \leq i \leq m$, we can rewrite the substate $Z_i$ as
\begin{equation}\label{eq:substate_separation_for_min_cov_induction}
\begin{split}
    Z_i(t) & = e^{ \overline{A}_{ii} h_i } \Tilde{Y}_i(t-h_i) + R^{(U)}_i(t-h_{i+1}) \\
    & + \int_{t-h_{i+1}}^t \Gamma_i(t-s) \overline{\sigma}(s) dW_s,
\end{split}
\end{equation}
where $ R^{(U)}_i$ is a $\mathcal{F}_{t}$-adapted process, dependent on the input $U$ and system parameters, and $\Gamma_i$ is a deterministic function taking values in $ \mathbb{R}^{N_i \times N}$, defined as
\begin{equation}\label{eq:expression_induction_Gamma_i}
\begin{split}
\Gamma_i(u) & \triangleq \int_{\max(u-h_i,0) }^{\min(u, h_m)} e^{ \overline{A}_{ii} (u-s) } \overline{G}_i(u) ds \\
& + \indic_{[0,h_i]}(u) \ e^{ \overline{A}_{ii} u } P_i + \sum_{j=i+1}^m \Theta_{i,j}(u),
\end{split}
\end{equation}
where $P_i \in  \mathbb{R}^{N_i \times N}$ is the projection onto the subspace of $Z_i$ (that is the matrix such that $P_i Z = Z_i$),  
and $\Theta_{i,j}(u) \in \mathbb{R}^{N_i \times N}$ is given by 
$$
\Theta_{i,j}(u) \triangleq \int_{\max(u-h_i,0 )}^{\min(u,h_j)}  e^{ \overline{A}_{ii} (u-s') }  \overline{A}_{ij}  \Gamma_j(s')  ds'.
$$
\end{theorem}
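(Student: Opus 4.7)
The plan is to proceed by backward induction on the block index $i$, starting from $i = m$ and descending to $i = 1$, exploiting the block upper-triangular structure of the Kalman decomposition from Theorem \ref{thm:Kalman_canonical_extended} together with Lemma \ref{lem:link_substate_cropped_predictor_tilde_Yi}. At each stage the strategy is the same: rewrite every noise contribution, via stochastic Fubini, as a single stochastic integral against $\overline{\sigma}(v) dW_v$, then split that integral at time $t - h_{i+1}$ in order to separate the recent noise window $[t - h_{i+1}, t]$ (which produces $\Gamma_i$) from the older noise (which is $\mathcal{F}_{t - h_{i+1}}$-measurable and absorbed, together with the control- and predictor-dependent terms, into $R_i^{(U)}(t - h_{i+1})$).

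For the base case $i = m$, the sums $\sum_{j > m}$ in Lemma \ref{lem:link_substate_cropped_predictor_tilde_Yi} are empty, leaving only the predictor term $e^{\overline{A}_{mm} h_m} \tilde{Y}_m(t - h_m)$, the intrinsic diffusion $\int_{t - h_m}^t e^{\overline{A}_{mm}(t - s)} P_m \overline{\sigma}(s) dW_s$, and the $\overline{r}_m$ contribution. Applying stochastic Fubini to the last piece, with $\overline{r}_m(s) = \int_{s - h_m}^s \overline{G}_m(s - v) \overline{\sigma}(v) dW_v$, gives a single stochastic integral whose integrand is obtained after the change of variable $s' = s - v$. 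Collecting the coefficients for $u = t - v$ yields the stated formula for $\Gamma_m$, and the pieces falling outside the window $[t - h_{m+1}, t]$ are bundled into $R_m^{(U)}$.

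For the induction step, I substitute the inductive formula for each $Z_j(s)$, $j > i$, into the right-hand side of Lemma \ref{lem:link_substate_cropped_predictor_tilde_Yi} and analyze the resulting terms in three classes. First, the adapted control-carrying terms $\tilde{Y}_j(s - h_j)$, $R_j^{(U)}(s - h_{j+1})$, and $U_j(s - h_j)$, once integrated against $e^{\overline{A}_{ii}(t - s)} \overline{A}_{ij}$ or $e^{\overline{A}_{ii}(t - s)} \overline{B}_{ij}$ over $s \in [0, t]$, are $\mathcal{F}_{t - h_{i+1}}$-measurable (since the monotonicity $h_{j}, h_{j+1} \geq h_{i+1}$ gives $t - h_j, t - h_{j+1} \leq t - h_{i+1}$) and are absorbed into $R_i^{(U)}(t - h_{i+1})$. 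Second, the intrinsic noise $\int_{t - h_i}^t e^{\overline{A}_{ii}(t - s)} P_i \overline{\sigma}(s) dW_s$ produces the indicator contribution $\indic_{[0, h_i]}(u) e^{\overline{A}_{ii} u} P_i$ to $\Gamma_i(u)$. Third, the $\overline{r}_i$ term together with the inductive noise integrals $\int_{s - h_{j+1}}^s \Gamma_j(s - v) \overline{\sigma}(v) dW_v$ are processed by stochastic Fubini; the change of variable $s' = s - v$ converts them into the first summand of $\Gamma_i$ and into the $\Theta_{i,j}$ contributions, respectively.

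The main obstacle will be the careful bookkeeping of the integration regions in each Fubini swap, and the verification that these regions produce precisely the bounds $\max(u - h_i, 0)$, $\min(u, h_m)$, and $\min(u, h_j)$ that appear in the statement. These bounds arise from intersecting the intrinsic length of each noise kernel (the support of $\overline{r}$, or the support of the inductive $\Gamma_j$) with the propagation window $[0, t - s]$ of the exponential $e^{\overline{A}_{ii}(t - s)}$, together with the restriction $s \leq t$. A secondary check is that every piece not of the form $\int_{t - h_{i+1}}^t \Gamma_i(t - s) \overline{\sigma}(s) dW_s$ is genuinely $\mathcal{F}_{t - h_{i+1}}$-measurable, which follows from the upper limits $t - h_j \leq t - h_{i+1}$ on the adapted terms and from the fact that the remaining stochastic integrals have integration window $[0, t - h_{i+1}]$. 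This closes the induction.
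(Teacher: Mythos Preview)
Your proposal is correct and follows essentially the same route as the paper's proof: backward induction starting at $i=m$, with the base case handled by applying stochastic Fubini to the $\overline{r}_m$ term in Lemma~\ref{lem:link_substate_cropped_predictor_tilde_Yi}, and the induction step obtained by substituting the inductive decomposition of each $Z_j$, $j>i$, back into that same lemma, then using stochastic Fubini and the change of variable $s'=s-v$ to produce the $\Theta_{i,j}$ contributions while pushing the older noise and all control-dependent terms into $R_i^{(U)}$. Your measurability argument for $R_i^{(U)}$ (via $h_j\ge h_{i+1}$ for $j\ge i+1$) is in fact slightly sharper than the statement in the paper, and matches how $R_i^{(U)}$ is used downstream in Theorem~\ref{thm:controllability_above_minimal_covariance_bound}.
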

\begin{proof}
The proof is done recursively. The initialization is proved with $h_{m+1} = 2 h_m$ by applying the results from \cite{velho_stabilization_2024} as the dynamic of $Z_m$ has only one delay. The rest of the proof is obtained by injecting the expression of $Z_j$ \eqref{eq:substate_separation_for_min_cov_induction} into equation \eqref{eq:link_substate_cropped_predictor_tilde_Yi}. A full proof with detailed computations can be found in Appendix~\ref{APP_THM_SEPARATION}   
\end{proof}
    Expression \eqref{eq:substate_separation_for_min_cov_induction} for the substate $Z_i$ 
    gives 
    a candidate for the structural minimal covariance, as we have
    \begin{equation}\label{eq:minimal_covariance_definition}
     \Sigma_{Z_i}(t) > \Sigma^{(i)}_{\min}(t) \triangleq \int_{t-h_i}^t \Gamma_i(t-s) \sigma(s) \sigma(s)^T \Gamma_i(t-s)^T ds .
     \end{equation}
To demonstrate that this bound is the minimal covariance, we need to show that there exist adapted controls capable of making $\Sigma_{Z_i}(t)$ approach this bound as closely as desired.

\subsection{Finite Time Controllability}

In what follows, we show that the previously derived bound \eqref{eq:minimal_covariance_definition}
can be approached arbitrarily closely. To establish this result, we leverage the finite-time controllability of the process $\Tilde{Y}_i$ (see \cite[Theorem 13]{mahmudov_controllability_2001}) to make the variance of the term
$$
e^{ \overline{A}_{ii} h_i } \Tilde{Y}_i(t-h_i) + R^{(U)}_i(t-h_{i+1}) + \int_{t-h_{i+1}}^{t-h_i} \Gamma_i(t-s) \overline{\sigma}(s) dW_s
$$
as small as desired. More precisely, we have the following controllability theorem
\begin{theorem}\label{thm:controllability_above_minimal_covariance_bound}
Let $T>h_m$, $\Sigma_T \in \mathcal{S}^{++}_n$ and $Z_T \in \mathbb{R}^N$. There exists a control $U \in \mathcal{U}$ such that the solution $Z$ to \eqref{eq:multi_input_delayed_SDE_kalman_form} associated with $U$ verifies $\espE[Z(T)] = Z_T$ and
$\Sigma_{Z_i}(T) = \Sigma^{(i)}_T + \Sigma^{(i)}_{\min}(T)$.
\end{theorem}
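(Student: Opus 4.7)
The plan is to combine the decomposition of Theorem \ref{thm:substate_separation_for_min_cov_induction} with the finite-time mean-covariance controllability of the auxiliary processes $\tilde{Y}_i$ from \cite[Theorem 13]{mahmudov_controllability_2001}, proceeding by downward induction on $i$ from $m$ to $1$ in order to exploit the triangular-by-block structure granted by Theorem \ref{thm:Kalman_canonical_extended}.

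First I would split the stochastic integral appearing in \eqref{eq:substate_separation_for_min_cov_induction} at the time $T - h_i$:
\begin{equation*}
\int_{T-h_{i+1}}^T \Gamma_i(T-s)\overline{\sigma}(s) dW_s = I_i^{\text{past}} + I_i^{\text{fut}},
\end{equation*}
where $I_i^{\text{past}} = \int_{T-h_{i+1}}^{T-h_i} \Gamma_i(T-s)\overline{\sigma}(s) dW_s$ is $\mathcal{F}_{T-h_i}$-measurable and $I_i^{\text{fut}} = \int_{T-h_i}^T \Gamma_i(T-s)\overline{\sigma}(s) dW_s$ is independent of $\mathcal{F}_{T-h_i}$. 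By It\^o isometry \eqref{eq:quadratic_variation_of_stochastic_integral}, the covariance of $I_i^{\text{fut}}$ is exactly $\Sigma_{\min}^{(i)}(T)$ and, being independent of the rest, this contribution is uncorrelated from the remainder of the decomposition. The quantity $R_i^{(U)}(T-h_{i+1}) + I_i^{\text{past}}$ then plays the role of an $\mathcal{F}_{T-h_i}$-measurable offset that must be compensated by the controllable term $e^{\overline{A}_{ii} h_i}\tilde{Y}_i(T-h_i)$.

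In the base case $i = m$, the triangular structure gives $R_m^{(U)} \equiv 0$ and an empty sum in \eqref{eq:expression_induction_Gamma_i}, so the decomposition reduces to the one-delay situation studied in \cite{velho_stabilization_2024}. Since $(\overline{A}_{mm}, \overline{B}_{mm})$ is controllable and $e^{-\overline{A}_{mm} h_m}$ is invertible, so is $(\overline{A}_{mm}, \tilde{B}_{mm})$, and I would invoke \cite[Theorem 13]{mahmudov_controllability_2001} to select $U_m$ on $[0, T-h_m]$ so that $e^{\overline{A}_{mm} h_m}\tilde{Y}_m(T-h_m)$ has prescribed mean $(Z_T)_m$ and covariance $\Sigma_T^{(m)}$; independence then yields $\Sigma_{Z_m}(T) = \Sigma_T^{(m)} + \Sigma_{\min}^{(m)}(T)$. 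For $i < m$, the controls $U_j$ for $j > i$ are already fixed by induction, so $R_i^{(U)}(T-h_{i+1})$ and $I_i^{\text{past}}$ become known $\mathcal{F}_{T-h_i}$-measurable random variables. I would apply the same controllability result to steer $\tilde{Y}_i(T-h_i)$ toward a modified mean (subtracting the offset mean) and a modified covariance (accounting for both the offset covariance and the cross-covariance with $\tilde{Y}_i$), so that the mean and covariance of $Z_i(T)$ match the prescribed targets.

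The main obstacle will be controlling the cross-covariance between $\tilde{Y}_i(T-h_i)$ and the offset $R_i^{(U)}(T-h_{i+1}) + I_i^{\text{past}}$, since all these quantities are driven by the same Wiener process and a naive independent decomposition does not apply. The fix is to precompensate by adjusting the covariance target assigned to $\tilde{Y}_i(T-h_i)$; the strict positive-definiteness of $\Sigma_T^{(i)} \in \mathcal{S}^{++}_n$ provides enough slack to absorb these corrections while keeping the modified target admissible in the sense of \cite{mahmudov_controllability_2001}. A secondary technical point is the verification that the inductively constructed $U_i$'s collectively remain in $\mathcal{U}$, which should follow from the continuity of the kernels $\Gamma_i$ and $\overline{G}$ and the linearity of the mean-covariance controllability construction.
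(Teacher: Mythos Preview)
Your overall architecture---downward induction on $i$, splitting the stochastic integral at $T-h_i$, and invoking controllability of $\tilde{Y}_i$---matches the paper's, but your treatment of the offset $R_i^{(U)}(T-h_{i+1}) + I_i^{\mathrm{past}}$ has a genuine gap. You propose to absorb it by modifying the covariance target for $\tilde{Y}_i(T-h_i)$, ``accounting for \ldots\ the cross-covariance with $\tilde{Y}_i$''. But that cross-covariance is not a fixed datum you can precompute: both $\tilde{Y}_i(T-h_i)$ and the offset are driven by the same Brownian path, and the cross term depends on the very control $U_i$ you are designing. Mean--covariance controllability lets you prescribe $\Sigma_{\tilde{Y}_i}$, yet two admissible controls achieving the same $\Sigma_{\tilde{Y}_i}$ will in general produce different cross-covariances with the offset, so the equation $e^{\overline{A}_{ii}h_i}\Sigma_{\tilde{Y}_i}e^{\overline{A}_{ii}^T h_i} + \Sigma_{\mathrm{offset}} + (\text{cross terms}) = \Sigma_T^{(i)}$ that you would like to solve for the ``modified target'' is circular. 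The strict positive-definiteness of $\Sigma_T^{(i)}$ does not help, because the correction is unknown until the control is chosen. (A smaller slip: $R_m^{(U)}$ is not identically zero---by the proof of Theorem~\ref{thm:substate_separation_for_min_cov_induction} it still carries noise-feedback terms through $\overline{r}_m$---although it is indeed independent of $U$.)

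The paper sidesteps this circularity by \emph{cancelling} the offset rather than absorbing it. The key point is that $R_i^{(U)}(T-h_{i+1})$ is already $\mathcal{F}_{T-h_{i+1}}$-measurable, so on the interval $[T-h_{i+1}, T-h_i]$ one may split $U_i = U_{i,Y} + U_{i,R}$ and take $U_{i,R}$ to be the explicit Gramian-based feedforward
\[
U_{i,R}(t) = -\tilde{B}_{ii}^T e^{\overline{A}_{ii}^T (T-h_i-t)} \bigl(G_{T-h_{i+1}}^{T-h_i}\bigr)^{-1} \Bigl( R_i^{(U)}(T-h_{i+1}) + e^{\overline{A}_{ii}(h_{i+1}-h_i)}\tilde{Y}_i(T-h_{i+1}) \Bigr),
\]
which is $\mathcal{F}_t$-adapted there and annihilates both the offset and the accumulated initial condition exactly; the same device disposes of $\int_{T-h_{i+1}}^{T-h_i-\eta}\Gamma_i(T-s)\overline{\sigma}(s)\,dW_s$ for any $\eta>0$. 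What remains is a fresh process $\hat{Y}_i$ with zero initial data and no cross-terms, whose mean and covariance can then be steered freely via \cite[Theorem~13]{mahmudov_controllability_2001}.
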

\begin{proof}
The proof can be be done by induction. It is given in Appendix~\ref{APP_THM_CONTROLABILITY}. 
\end{proof}
 Selecting $\Sigma_T$ arbitrarily small, Theorem \ref{thm:controllability_above_minimal_covariance_bound} implies that we can steer the state $Z$ to a final covariance that can be arbitrarily close to the limit $\Sigma_{\min}(T)$.
 However, the control developed in the proof of Theorem \ref{thm:controllability_above_minimal_covariance_bound} 
 for guiding the system is an open-loop stochastic control 
 , which presents challenges for numerical implementation \cite{touzi_introduction_2013}. Therefore, we need to explore the potential of addressing the steering issue with a feedback controller. Below, we show that a simple feedback controller can effectively stabilize the system in terms in mean, while also ensuring that the variance remains bounded.

\subsection{Stabilization of the PDE+SDE}
To achieve infinite time control of the mean-variance, we can design a simple controller that exponentially stabilizes the mean of the system to zero while ensuring the covariance remains bounded.
\begin{theorem}\label{thm:stabilization_system_feedback}
Define the feedback controller $U_i(t) = - K_i Y_i(t),$
where $Y_i$ is given by equation~\eqref{eq:Def_Artstein_predictor_substate_Zi}, and $K_i$  is  such that $H_i \triangleq \overline{A}_{ii} - \Tilde{B}_{ii} K_i$ is Hurwitz. Let $\Vin$ be the control law $\Vin(t) = \VPDE(t) + \Veff(t)$, with $\VPDE$ as defined in \eqref{eq:definition_expression_backstep_controller} and $\Veff$ as defined in \eqref{eq:def_of_Veff_in_terms_of_VSDE} with $\VSDE$ set as $\VSDE^{(i)} \triangleq U_{m-i}$. 
Then the control $\Vin$ steers the expectation of the PDE-SDE system state to zero while keeping the variance bounded, as in \eqref{eq:stabilization_mean_expectation}.
 \end{theorem}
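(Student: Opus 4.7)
The plan is to work in the Kalman coordinates $Z=MX$ and exploit the block-triangular structure of the target system~\eqref{eq:multi_input_delayed_SDE_kalman_form}, proving the claim substate by substate starting from the bottom block $Z_m$ and propagating upward. First I would substitute $U_i(t)=-K_iY_i(t)$ into the predictor SDE~\eqref{eq:SDE_Artstein_predictor_substate_Zi}, which yields
\begin{equation*}
dY_i(t)=H_iY_i(t)\,dt+\Bigl(\sum_{j>i}\overline{A}_{ij}Z_j(t)+\sum_{j>i}\overline{B}_{ij}U_j(t-h_j)\Bigr)dt+\overline{r}_i(t)\,dt+\overline{\sigma}_i(t)\,dW_t.
\end{equation*}
For $i=m$ the coupling sums vanish, so $\espE[Y_m]$ obeys a linear ODE driven only by $H_m$, which is Hurwitz, hence $\|\espE[Y_m(t)]\|\le Ce^{-\nu t}\|\espE[Y_m(0)]\|$. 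Using the definition \eqref{eq:Def_Artstein_predictor_substate_Zi} of $Y_m$ together with $U_m=-K_mY_m$, one recovers the same exponential decay for $\espE[Z_m(t)]$.

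For the recursion, assuming $\espE[Z_j]$ and $\espE[U_j]$ decay exponentially for every $j>i$, the forcing terms in the mean equation for $Y_i$ are exponentially decaying inputs through a Hurwitz linear system, so variation of constants immediately delivers exponential decay of $\espE[Y_i]$, and hence of $\espE[Z_i]$. Assembling these bounds by the invertibility of $M$ gives $\|\espE[X(t)]\|\le Ce^{-\nu t}\|\espE[X_0]\|$. The PDE components $\espE[\alpha(t,x)]$ and $\espE[\beta(t,x)]$ are then handled by the explicit characteristic-based formulas from Lemma~\ref{lem:track_VSDE_with_Veff_and_formula_beta}: $\espE[\beta(t,0)]=\espE[V_{SDE}(t-\tfrac{1}{\mu_1})]$ (the stochastic integral has zero mean), which decays exponentially, and then $\espE[\alpha(t,x)]$, $\espE[\beta(t,x)]$ are obtained by transport of exponentially decaying boundary data plus an exponentially decaying in-domain forcing $\gamma\cdot\espE[X]$. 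A final application of the boundedness of the inverse backstepping transformation transfers the decay back to $\espE[u(t,\cdot)]$ and $\espE[v(t,\cdot)]$.

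For the variance bound I would proceed in the same bottom-up order. The process $Y_m$ satisfies a linear SDE with Hurwitz drift $H_m$, deterministic bounded diffusion $\overline{\sigma}_m\in L^\infty$, and an extra stochastic forcing $\overline{r}_m$ whose variance is uniformly bounded because $\sigma$ is essentially bounded and $\overline{G}$ is continuous on a compact set; standard Lyapunov estimates on $\espE[\|Y_m\|^2]$ using the Hurwitz property of $H_m$ (e.g.\ $H_m^TP+PH_m=-I$) then yield a uniform bound. The identity~\eqref{eq:Def_Artstein_predictor_substate_Zi} and the linear feedback $U_m=-K_mY_m$ transfer this bound to $Z_m$. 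The inductive step for $i<m$ adds bounded-in-variance forcings $\overline{A}_{ij}Z_j$, $\overline{B}_{ij}U_j(t-h_j)$ coming from the already-handled lower blocks, and the same Lyapunov argument closes. Returning to the original coordinates and to $(u,v)$ via the bounded inverse backstepping map concludes the second inequality in~\eqref{eq:stabilization_mean_expectation}.

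The main obstacle is bookkeeping rather than conceptual: one must carefully propagate both the exponential decay rate $\nu$ (choosing it smaller than the worst Hurwitz margin among the $H_i$ to absorb all coupling-induced polynomial factors from variation of constants) and the boundedness constants through the triangular hierarchy, while simultaneously checking that the predictor definition~\eqref{eq:Def_Artstein_predictor_substate_Zi} does not spoil these estimates through its integral over a past window of controls. I would also verify at the outset that $V_{SDE}^{(i)}\triangleq U_{m-i}$ is $\mathcal{F}$-adapted and lies in $C^2_\mathcal{F}([0,T];\mathbb{R}^m)$, so that Lemma~\ref{lem:well_posed_SPDE_L2} applies and all formal manipulations above are justified.
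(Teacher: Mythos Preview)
Your proposal is correct and follows essentially the same approach as the paper: both argue by induction on the block-triangular structure of~\eqref{eq:multi_input_delayed_SDE_kalman_form}, starting from $Z_m$, exploit the Hurwitz drift $H_i$ in the predictor dynamics to get exponential decay of $\espE[Y_i]$ and boundedness of $\espE[\|Y_i\|^2]$, transfer these to $Z_i$ via~\eqref{eq:Def_Artstein_predictor_substate_Zi}, and then pull everything back to $(X,u,v)$ through the explicit characteristic formulas of Lemma~\ref{lem:track_VSDE_with_Veff_and_formula_beta} and the bounded invertibility of the backstepping and Kalman transforms. The paper's proof is slightly terser (it invokes the one-delay result of~\cite{velho_stabilization_2024} for the base case and writes $Z_i$ directly via the identity~\eqref{eq:appendix_link_artstein_original_substate_Yi} rather than passing through the predictor definition), but the argument is the same.
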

 \begin{proof}
The proof of the Theorem can be found in Appendix \ref{APP_THM_STABILIZATION}. 
\end{proof}


\section{Optimal Control of the SDE}\label{OPT}

In the previous section, we established that while open-loop controls can steer variance to a minimal threshold, they are not practical to implement. 

To address this, we turn to optimal control techniques that minimize a linear-quadratic (LQ) cost functional, making them ideal candidates for maintaining low variance. The LQ cost functional is given by:
\begin{equation}\label{eq:def_quadratic_optimal_cost_2}
\begin{split}
    & J_R(\Veff,Z) \triangleq  \espE \left[ \int_{h_m}^T Z(t)^T Q(t) Z(t) dt \right] \\
    & \quad + \espE \left[\int_0^{T-h_m} U(t)^T R(t) U(t) dt \right],
\end{split}
\end{equation}

where $Q \in L^\infty([0,T], \mathcal{S}_n^+)$ and $R \in L^\infty([0,T], \mathcal{S}_n^{++})$ balance state variance and control effort over time.

Explicit solutions for this LQ problem with multiple input delays and random coefficients are complex and, to the extend of our knowledge, unresolved for the general case. Under additional assumptions, we can derive results for particular cases. We analyze the optimal variance achieved by minimizing $J_R$ and demonstrate that, given certain controllability conditions, the optimal LQ control can keep the state variance arbitrarily close to the minimal achievable variance throughout time as we make the parameter $R$ go to zero.



\subsection{Optimal control of the equivalent SDE}



In the following, we present two cases where an explicit form of the optimal control minimizing the cost \eqref{eq:def_quadratic_optimal_cost_2} can be derived:

\begin{itemize} 
\item The single-delay case was examined in our previous study \cite{velho_stabilization_2024}, where the optimal control for one delay was explicitly determined. 
\item For the case $M=0$ in system \eqref{eq:coupled_PDE_SDE}—where there is no feedback from the SDE into the PDE—the equivalent delayed SDE has no random drift, allowing us to use results from optimal LQ control of stochastic systems with deterministic coefficients and multiple input delays \cite{wang_lq_2013, wang_optimal_2021}. However, these controls currently lack a direct numerical implementation for solving the associated Riccati equation in continuous time. While progress has been made recently for discrete-time systems \cite{wang_lqr_2024}, continuous-time solutions remain an open area for future work. 
\end{itemize}

\subsection{Attaining the minimal variance}

In what follows, we study the variance associated to the optimal control minimizing \eqref{eq:def_quadratic_optimal_cost_2}.

We define the minimal weigthed variance by
$$ V^{(i)}_{\min,Q}(t) \triangleq \int_{t-h_i}^t \sigma(s)^T \Gamma_i(t-s)^T Q(t) \Gamma_i(t-s) \sigma(s)  ds,$$
and the minimum cost over $[0,T]$ by
$$
J_{\min,Q}^{(i)} \triangleq \int_{h_m}^T V^{(i)}_{\min,Q}(t) dt, 
$$

To prove that the optimal control achieves minimal variance in each substate as the $R$ goes to zero, we will first establish, under some additional controllability assumptions on $A$ and $B$, the existence of a sequence of controls that approach this minimal variance. Next, we show that the optimal control must outperform that sequence for certain as the penalization $R \rightarrow 0$. Note that assuming full actuation is not particularly restricting, in that, if this assumption is not satisfied, one can prove the existence of random states that can not be reached by any open-loop control, see, e.g., \cite{wang_exact_2017, velho_mean-covariance_2023}.


\begin{assumption}\label{asm:assumption_n_equal_m}
    We assume that $m=n$, therefore the matrices $\overline{B}_{ii}$ are non zero real numbers.
\end{assumption}

\begin{assumption}\label{asm:R_is_equal_to_identity}
    To enhance clarity, we assume that the penalization of the cost $R$ is proportional to the identity $R = \rho I$. We denote $J_\rho(U)$ the cost associated to the regularization parameter $\rho$ of the control $U$. Our results would also hold for more general matrices $R$.
\end{assumption}

\begin{theorem}\label{thm:sequence_of_controls_to_minimal_variance}
    Under assumptions \ref{asm:assumption_n_equal_m} and \ref{asm:R_is_equal_to_identity}, there exists a sequence of adapted controllers $U_n \in \mathcal{U}$ such that the sequence of substates $Z_i^{(n)}$ verify
    $$
    \int_{h_i}^T  V_{Z_i^{(n)}}(t) dt  \rightarrow \int_{h_i}^T  V^{(i)}_{\min,Q}(t) dt
    $$
\end{theorem}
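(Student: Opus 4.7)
My plan is to build the sequence $U_n$ by a reverse induction on the Kalman block index $i=m,m-1,\dots,1$, at each stage using a high-frequency adapted feedback on the $i$-th scalar input (this is where Assumption \ref{asm:assumption_n_equal_m} is crucial, since it makes every $\tilde B_{ii}$ a non-zero scalar and hence invertible) to cancel all parts of $Z_i$ that are not the \emph{fresh noise} that no adapted control can ever reach.

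The starting point is the substate decomposition
\begin{equation*}
Z_i(t) = e^{\overline{A}_{ii} h_i} \tilde Y_i(t-h_i) + R^{(U)}_i(t-h_{i+1}) + \int_{t-h_{i+1}}^t \Gamma_i(t-s)\overline{\sigma}(s)\, dW_s,
\end{equation*}
provided by Theorem \ref{thm:substate_separation_for_min_cov_induction}. I split the stochastic integral as $\int_{t-h_{i+1}}^{t-h_i} + \int_{t-h_i}^t$; the second piece is $\mathcal F_{t-h_i}$-independent and, once weighted by $Q(t)$, contributes exactly $V^{(i)}_{\min,Q}(t)$. The three remaining pieces are all $\mathcal F_{t-h_i}$-measurable, so in principle an adapted $U_i$ acting through $\tilde Y_i$ can kill them. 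The goal at each induction step is to achieve this cancellation in the limit $n\to\infty$.

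For $i=m$ the cross term $R^{(U)}_m$ vanishes (use $h_{m+1}=2h_m$ as in the proof of Theorem \ref{thm:substate_separation_for_min_cov_induction}) and I only need to annihilate the noise entering $\tilde Y_m$ together with the tail $\int_{t-h_{m+1}}^{t-h_m}\Gamma_m(t-s)\overline\sigma(s)dW_s$. I take an adapted piecewise-constant feedback on a grid of mesh $1/n$: on the interval $[k/n,(k+1)/n]$ set $U_m^{(n)}$ to be the $\tilde B_{mm}^{-1}$-scaled quantity that neutralizes, in integrated form, the centered noise accumulated on $[(k-1)/n,k/n]$ plus the appropriate weighting by $\Gamma_m$. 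Itô isometry and a standard Riemann-sum argument for $\int \overline\sigma_m(s)dW_s$ yield $\mathrm{Var}[\tilde Y_m^{(n)}(t-h_m)]\to 0$ pointwise in $t$. At the inductive step I assume $U_{i+1}^{(n)},\dots,U_m^{(n)}$ already built with $V_{Z_j^{(n)}}(t)\to V^{(j)}_{\min,Q}(t)$ pointwise and with variance uniformly controlled in $n$. The terms in $R^{(U)}_i$ and the contribution of the $Z_j$ and $U_j(\cdot-h_j)$ to the drift of $\tilde Y_i$ are then fixed adapted processes, and the same mollified-feedback construction on $U_i^{(n)}$ cancels their $\mathcal F_{t-h_i}$-measurable stochastic fluctuations.

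Convergence of the integrated variance follows from Itô isometry (equation \eqref{eq:quadratic_variation_of_stochastic_integral}) applied to the fresh-noise tail, combined with $L^2$-convergence of the cancelled part to $0$, pointwise in $t\in[h_i,T]$. A uniform-in-$n$ bound on $V_{Z_i^{(n)}}(t)$ (obtained, e.g., by comparing with a simple benchmark piecewise-constant control that achieves a fixed finite cost) allows bounded convergence to pass to $\int_{h_i}^T V_{Z_i^{(n)}}(t)\,dt\to\int_{h_i}^T V^{(i)}_{\min,Q}(t)\,dt$, which is the conclusion.

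The hardest part will be the induction step: the lower-block controls $U_j^{(n)}$, $j>i$, are high-frequency and therefore have large $L^2$-norm in $n$, so the cross-coupling terms $\int \overline A_{ij} Z_j^{(n)}(s) ds + \int \overline B_{ij} U_j^{(n)}(s-h_j)ds$ entering $\tilde Y_i^{(n)}$ could a priori blow up. The crux is to show that, although the individual controls are large, the induced drift on $Z_i$ is the integral of an adapted process whose \emph{residual fluctuation after cancellation} is small: this is where I will need a careful coupling between the mollification scales of the different blocks (for instance, choosing the mesh of block $i$ finer than that of block $i+1$), together with Itô isometry to convert the high-frequency feedbacks back into controlled Brownian increments. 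Once this $L^2$-control of the cross-coupling residual is in place, the rest of the argument is a bookkeeping of Itô-isometry estimates and an application of the dominated convergence theorem.
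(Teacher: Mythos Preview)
Your approach diverges from the paper's in a significant way, and the divergence is exactly where you yourself identify a gap.

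The paper does \emph{not} discretize on a mesh. Instead it builds, for each block $i$, a \emph{continuous high-gain feedback}: after an explicit change of predictor $\overline Y_i$ absorbing the tail noise, one sets
\[
U_i(t)\;=\;-\tilde B_{ii}^{-1}\bigl((K_i+\overline A_{ii})\,\overline Y_i(t)+\tilde r_i(t)\bigr),
\]
so that $\overline Y_i$ satisfies a scalar linear SDE with drift rate $-K_i$. A direct It\^o-isometry bound then gives that the extra contribution to $\int_{h_m}^T V_{Z_i}(t)\,dt$ is $O(1/K_i)$, and one lets $K_i\to\infty$.

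The point you flag as ``the hardest part'' is handled in the paper by \emph{exact} cancellation, not by a scale-coupling argument. Because $h_j>h_i$ for $j>i$, the cross terms $\overline B_{ij}U_j(t-h_j)$ and the controllable parts of $Z_j(t)$ coming from the induction hypothesis are all $\mathcal F_{t-h_i}$-measurable; the paper simply subtracts them off inside $U_i(t-h_i)$ (this is the line $U_i(t-h_i)=\hat U_i(t-h_i)-\overline B_{ii}^{-1}\sum_{j>i}(\cdots)$ in the induction step). After this exact subtraction the remaining dynamics for $Z_i$ have no dependence on $K_j$ at all, so the blow-up you worry about never occurs. Your proposal leaves this cancellation implicit and then tries to control a residual that, with your mollified piecewise-constant scheme, is genuinely hard to bound; the paper sidesteps the difficulty entirely.

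Two smaller remarks. First, $R^{(U)}_m$ does \emph{not} vanish: it contains the contribution of $\overline r_m$ (the stochastic drift inherited from the PDE feedback), so even at $i=m$ there is something to cancel. Second, the paper never needs a dominated-convergence argument with a uniform benchmark bound: the explicit $O(1/K_i)$ estimate on the integrated variance is already uniform in $t$ and in the other gains, so the passage to the integral is immediate.
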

\begin{proof}
    The proof can be found in Appendix \ref{APP_THM_SEQUENCE_TO_MINIMAL} 
\end{proof}

Implementing these adapted controllers may present practical challenges. However, they demonstrate that the previously obtained optimal controls, which are easily implementable, make the optimal cost converge to the minimal weighted variance as the penalization parameter $R$ goes to zero.
\begin{theorem}\label{thm:optimal_control_converging_to_min_var}
    Let $U_\rho^*$ be the optimal control minimizing $J_\rho$, then
$$
J^{(i)}_\rho( U_\rho^*)  \underset{\rho \rightarrow 0}{\rightarrow}  \int_{h_m}^T  V^{(i)}_{\min,Q}(t) dt.
$$
\end{theorem}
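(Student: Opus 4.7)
The argument is a squeeze between a structural lower bound independent of $\rho$ and a $\rho$-dependent upper bound obtained by plugging in near-optimal adapted controls from Theorem~\ref{thm:sequence_of_controls_to_minimal_variance}.

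First I would establish the lower bound $J^{(i)}_\rho(U)\geq \int_{h_m}^T V^{(i)}_{\min,Q}(t)\,dt$, uniformly in $U\in\mathcal{U}$ and $\rho>0$. Starting from the decomposition in Theorem~\ref{thm:substate_separation_for_min_cov_induction}, I split the stochastic contribution as $\int_{t-h_{i+1}}^t \Gamma_i(t-s)\overline{\sigma}(s)\,dW_s = \int_{t-h_{i+1}}^{t-h_i}(\cdots)\,dW_s + \int_{t-h_i}^t(\cdots)\,dW_s$. The first piece is $\mathcal{F}_{t-h_i}$-measurable and can be merged with the controllable part $e^{\overline{A}_{ii}h_i}\widetilde Y_i(t-h_i) + R^{(U)}_i(t-h_{i+1})$, while the second piece is independent of $\mathcal{F}_{t-h_i}$ and hence $L^2$-orthogonal to every adapted strategy. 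It\^o's isometry~\eqref{eq:quadratic_variation_of_stochastic_integral} applied to this ``fresh noise'' window then yields $\espE[Z_i(t)^T Q(t) Z_i(t)]\geq V^{(i)}_{\min,Q}(t)$; integrating over $[h_m,T]$ and discarding the nonnegative control-energy term delivers the bound.

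Second I would produce a matching upper bound by a diagonal extraction. Theorem~\ref{thm:sequence_of_controls_to_minimal_variance} provides a sequence $(U_n)\subset\mathcal{U}$ whose induced substate variance-integrals converge to $\int_{h_m}^T V^{(i)}_{\min,Q}(t)\,dt$; the construction in that proof can be arranged to enforce $\espE[Z_i^{(n)}]\equiv 0$ (as permitted by Theorem~\ref{thm:controllability_above_minimal_covariance_bound}), so the weighted mean-square cost tracks exactly the weighted variance. Each $U_n$ has finite energy $E_n\triangleq \espE[\int_0^{T-h_m} U_n^T U_n\,dt]<\infty$, hence I can pick $n(\rho)\to\infty$ as $\rho\to 0$ slowly enough that $\rho\,E_{n(\rho)}\to 0$ (a standard Cantor-diagonal selection works, since for each fixed $n$ one has $\rho E_n\to 0$). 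Optimality of $U^*_\rho$ then gives $J^{(i)}_\rho(U^*_\rho)\leq J^{(i)}_\rho(U_{n(\rho)})\longrightarrow \int_{h_m}^T V^{(i)}_{\min,Q}(t)\,dt$, and combining with Step~1 closes the squeeze.

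The main obstacle I anticipate lies in Step~1: one has to exploit the block-triangular Kalman structure to verify that the cross-couplings arising from controls $U_j$, $j\geq i+1$, together with the terms $\Theta_{i,j}$ appearing in~\eqref{eq:expression_induction_Gamma_i}, all fall inside the absorbable $\mathcal{F}_{t-h_i}$-measurable part of the decomposition rather than contaminating the irreducible noise window $[t-h_i,t]$. Carefully tracing the composition defining $\Gamma_i$ and $R^{(U)}_i$ is needed to rule out hidden leakage of the fresh noise onto the controlled part, which is precisely the measurability step required by the It\^o-isometry argument.
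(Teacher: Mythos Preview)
Your proposal is correct and follows essentially the same route as the paper. The paper's proof is a two-line sketch: take the sequence $(U_n)$ from Theorem~\ref{thm:sequence_of_controls_to_minimal_variance}, choose $\rho_n$ so that $\rho_n\,\espE[\int_0^{T-h}U_n^T U_n\,dt]\to 0$, and invoke optimality of $U^*_{\rho_n}$; the lower bound you spell out in Step~1 is not repeated there because it is already recorded in the minimal-covariance inequality~\eqref{eq:minimal_covariance_definition} derived from Theorem~\ref{thm:substate_separation_for_min_cov_induction}. Your concern about cross-coupling leakage in the $\Theta_{i,j}$ terms is exactly the content of that earlier theorem, so no new measurability check is needed here.
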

\begin{proof}
    The proof follows similar arguments to those used in the proof of \cite[Lemma 10]{velho_mean-covariance_2023}. Let us consider $U_n$ as defined in 
    \ref{thm:sequence_of_controls_to_minimal_variance}, and choose $\rho_n$ such that $ \rho_n \espE \left[\int_0^{T-h} U_n(t)^T U_n(t) dt \right] \rightarrow 0$. It implies that $J_{\rho_n}^{(i)}(U_n) \rightarrow J_{\min}^{(i)}$. Combined with the optimality of $U^*_{\rho_n}$, it implies that $J_{\rho_n}^{(i)}(U^*_{\rho_n}) \rightarrow J_{\min}^{(i)}$.
\end{proof}

\section{Simulations}\label{SIM}

To showcase the efficiency of our stabilizing method in a multi-input delayed system, we apply the control strategy described in Theorem \ref{thm:stabilization_system_feedback} to an academic example. 
We consider the unstable system
\small
\begin{align*}
 & d\left( \begin{array}{c} x_1(t) \\ x_2(t) \end{array} \right)  = \left( \begin{array}{cc} a & a \\ 0 & a  \end{array} \right) \left( \begin{array}{c} x_1(t) \\ x_2(t) \end{array} \right)dt \\
& \hspace{3em}  + \left( \begin{array}{cc} b & -b \\ 0 & b  \end{array} \right) \left( \begin{array}{c} U_1(t-h_1) \\ U_2(t-h_2) \end{array} \right)dt \\
& \hspace{3em}  + \left( \int_{t-h_2}^t e^{-\theta (t-s) } \left( \begin{array}{c} \sigma \\ \sigma \end{array} \right) dW_s \right) dt + \left( \begin{array}{c} \sigma \\ \sigma \end{array} \right)dW_t 
\end{align*}
\normalsize
with $a = 0.4$, $b=2$, $h_1 = 0.5$, $h_2 = 1$, $\theta = 0.2$, $\sigma = 0.3$.

\begin{figure}[ht!]
\centering
  \includegraphics[width=0.90\linewidth]{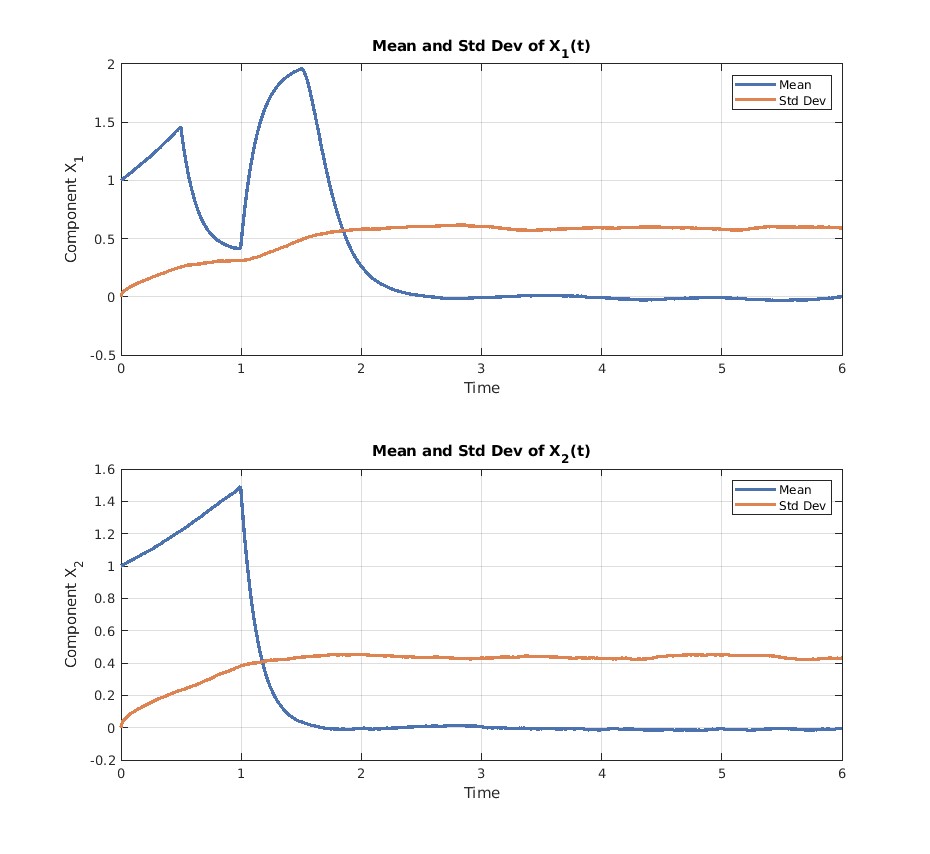}
  \caption{\centering Mean and deviation (square root of the variance) of the two components of the SDE state are depicted. The values were computed through a Monte-Carlo approach.}
  \label{fig:plot_SDE_mean_var_closed_open}
\end{figure}
Figure \ref{fig:plot_SDE_mean_var_closed_open} shows that, despite initial spikes due to the two input delays, the controller successfully stabilizes the state while keeping the deviation bounded.


\section{Conclusion}\label{CCL}

In this paper, we have studied the controllability of bidirectionally interconnected PDE+SDE systems, where the PDE is a general system of $n + m$ linear first-order equations. We demonstrated that this coupling introduces delays and a stochastic drift in the SDE, creating unique challenges for control. Two control strategies were proposed: one for stabilization in mean with bounded state variance, and another that minimizes the SDE variance, enhancing robustness against noise. Under additional controllability assumptions, we showed that the optimal control can approximate the structural minimum variance achievable.
We plan to explore further avenues for research, including the generalization of the PDE to a parabolic system, as well as the extension of the SDE framework to incorporate multiplicative noise. We also plan to investigate the potential benefits of employing alternate backstepping techniques used successfully in PDE+ODE systems, such as the multi-step approach \cite{irscheid_observer_2021}, or dimensional reduction techniques \cite{morris_controller_2020}.


\bibliographystyle{ieeetr}
\bibliography{references_clean}

\section{Appendix}\label{A1}

\subsection{Proof of Lemma \ref{lem:track_VSDE_with_Veff_and_formula_beta}}\label{APP_LEM_TRACK_NEW}

\begin{proof}

We prove that there exists matrix functions $F_i$ and $G_i$ such that for all $1 \leq i \leq m$ we have
\begin{equation}\label{eq:formula_for_beta_induction_new}
\begin{split}
    \beta_i(t,x) & = \beta_i \left(t- \frac{1-x}{\mu_i} , 1 \right) \\
    & \quad +  \sum_{j=i+1}^m \int_{\frac{1-x}{\mu_j}}^{\frac{1-x}{\mu_i}} \left( F_i \right)_j(x,s) \beta_j(t-s,1) ds, \\
    & \quad + \int_{t - \frac{1-x}{\mu_i}}^t G_i(x,t-s) \sigma(s) dW_s,
\end{split}
\end{equation}
Then, by (recursively) setting 
\begin{align*}
\Veff^{(i)}(t) & = \VSDE^{(i)}\left(t + \frac{1}{\mu_i} - \frac{1}{\mu_1} \right) \\
& -  \sum_{j=i+1}^m \int_{\frac{1}{\mu_j}}^{\frac{1}{\mu_i}} \left( F_i \right)_j(0,s) \Veff^{(j)}\left(t + \frac{1}{\mu_i} - s\right) ds
\end{align*}
we obtain that
\begin{itemize}
    \item $\Veff^{(i)}(t)$ is $\mathcal{F}_{t + \frac{1}{\mu_i} -\frac{1}{\mu_1}}$-adapted.
    \item $\beta_i(t,0) = \VSDE \left(t- \frac{1}{\mu_1} \right) + \int_{t - \frac{1}{\mu_i}}^t G_i(0,t-s) \sigma(s) dW_s,$
\end{itemize}
which concludes our proof.

We now prove \eqref{eq:formula_for_beta_induction_new} by induction.\\
\noindent \textbf{Initialization:} The function $\beta_m$ verifies the following SPDE:
$$
d \beta_m(t,x) - \mu_m (\beta_m)_x(t,x) = \gamma_\beta^{(m)}(x) \sigma(t) dW_t.
$$
Therefore, its explicit expression is given by the stochastic convolution \cite{hairer_introduction_2009}
\begin{equation}\label{eq:formula_for_beta_m_new_induction}
\begin{split}
    \beta_m(t,x) & = \Veff^{(m)} \left(t - \frac{1-x}{\mu_m} \right) \\
    & \quad + \int_{t - \frac{1-x}{\mu_m}}^t \gamma_\beta^{(m)}(x + \mu_m(t-s)) \sigma(s) dW_s,
\end{split}
\end{equation}
and equation \eqref{eq:formula_for_beta_induction_new} is verified by setting $G_m$ as
\begin{align*}
G_m(x,t-s) \triangleq \gamma_\beta^{(m)}(x + \mu_m(t-s)).
\end{align*}

\noindent \textbf{Induction:} Let $1 \leq i \leq m-1$ be an integer and assume that~\eqref{eq:induction_hypo_lemma_tracking_delayed_SDE_beta} is verified for all $j\geq i$. The state $\beta_i$ verifies the following SPDE
\begin{align*}
d \beta_i(t,x) - \mu_i (\beta_i)_x(t,x) & = \left( \sum_{j=i+1}^m \omega_{i,j}(x) \beta_j(t,x)   \right) dt \\
& \quad \ + \gamma_\beta^{(i)}(x) \sigma(t) dW_t.
\end{align*}
The explicit expression of $\beta_i$ is therefore given by
\small
\begin{align*}
& \beta_i(t,x) = \Veff^{(i)} \left(t - \frac{1-x}{\mu_i} \right) \\
& + \int_{t - \frac{1-x}{\mu_i}}^t \left( \sum_{j=i+1}^m \omega_{i,j}(x + \mu_i(t-s)) \beta_j(s,x + \mu_i(t-s))   \right) ds \\
& + \int_{t - \frac{1-x}{\mu_i}}^t \gamma_\beta^{(i)}(x + \mu_i(t-s)) \sigma(s) dW_s.
\end{align*}
\normalsize
our induction hypothesis states that 
\begin{align*}
 & \beta_j(s,x+\mu_i (t-s) ) = \beta_j \left(s- \frac{1-x-\mu_i (t-s)}{\mu_j} , 1 \right) \\
    &  +  \sum_{k=j+1}^m \int_{\frac{1-x-\mu_i (t-s)}{\mu_k}}^{\frac{1-x-\mu_i (t-s)}{\mu_j}}  F_{jk}(x+\mu_i (t-s),\tau) \beta_k(s-\tau,1) d\tau \\
    & + \int_{s - \frac{1-x-\mu_i (t-s)}{\mu_j}}^{s} G_j(x+\mu_i (t-s),s-\tau) \sigma(\tau) dW_\tau,
\end{align*}
we inject this formula into the expression of $\beta_i(t,x)$ and obtain
\small
\begin{align*}
& \beta_i(t,x) = \Veff^{(i)} \left(t - \frac{1-x}{\mu_i} \right) \\
& + \int_{t - \frac{1-x}{\mu_i}}^t \Biggl[ \sum_{j=i+1}^m \omega_{i,j}(x + \mu_i(t-s)) \times \\
& \hspace{5em} \beta_j \left(s- \frac{1-x-\mu_i (t-s)}{\mu_j} , 1 \right)  \Biggr] ds\\
& + \int_{t - \frac{1-x}{\mu_i}}^t \Biggl[ \sum_{j=i+1}^m \omega_{i,j}(x + \mu_i(t-s)) \times \\
& \quad  \sum_{k=j+1}^m \int_{\frac{1-x-\mu_i (t-s)}{\mu_k}}^{\frac{1-x-\mu_i (t-s)}{\mu_j}} \left( F_j \right)_k(x+\mu_i (t-s),\tau) \beta_k(s-\tau,1) d\tau \Biggr] ds \\
& + \int_{t - \frac{1-x}{\mu_i}}^t \Biggl[ \sum_{j=i+1}^m \omega_{i,j}(x + \mu_i(t-s)) \times \\
& \quad \int_{s - \frac{1-x-\mu_i (t-s)}{\mu_j}}^{s} G_j(x+\mu_i (t-s),s-\tau) \sigma(\tau) dW_\tau \Biggr] ds \\
& + \int_{t - \frac{1-x}{\mu_i}}^t \gamma_\beta^{(i)}(x + \mu_i(t-s)) \sigma(s) dW_s.
\end{align*}
\normalsize
We permute the sums and the integrals with the Fubini theorem on the term containing $\left( F_j \right)_k$, we also permute the integrals on the stochastic integral with the stochastic Fubini theorem \cite[Chapter 4, Theorem 45]{protter_general_2005}. More details on this exchange can be found in the supplementary material \ref{EXCHANGE_INT}. This, combined with a change of variable, leads to 

\begin{align*}
& \beta_i(t,x) = \Veff^{(i)} \left(t - \frac{1-x}{\mu_i} \right) \\
& + \sum_{j=i+1}^m \int_{\frac{1-x}{\mu_j}}^{\frac{1-x}{\mu_i}} \Hat{F}_{ij}(x,s) \beta_j(t-s,1) ds \\
& + \sum_{k=i+2}^m \int_{\frac{1-x}{\mu_k}}^{\frac{1-x}{\mu_i}} \Tilde{F}_{ik}(x,\tau) \beta_k(t-\tau,1) d\tau \\
& + \int_{t - \frac{1-x}{\mu_i}}^t G_i(x,t-s) \sigma(s) dW_s 
\end{align*}
with 
\small
\begin{equation}\label{eq:definition_2_F_multi_Veff}
\begin{split}
  \Hat{F}_{ij}(x,s) & \triangleq \omega_{ij}\left(x + \frac{\mu_i \mu_j}{\mu_j - \mu_i} \left( s - \frac{1-x}{\mu_j} \right) \right) \\
   \Tilde{F}_{ik}(x,\tau) & \triangleq \sum_{j = i+1}^{k-1} \int_{\underline{s}_{ij}(x,\tau)}^{\overline{s}_{ik}(x,\tau)} \omega_{i,j}(x + \mu_i s) F_{jk}(x+\mu_i s, \tau - s) ds\\
   \underline{s}_{ij}(x,\tau) & \triangleq \max \left( 0, \frac{\mu_j \tau - (1-x)}{\mu_j - \mu_i} \right) \\
   \overline{s}_{ik}(x,\tau) & \triangleq \frac{\mu_k \tau - (1-x)}{\mu_k - \mu_i}
\end{split}
\end{equation}
\normalsize
and $G_i$ given by \eqref{eq:def_of_G_state_feedback_noise}. We obtain  \eqref{eq:formula_for_beta_induction_new} by setting
\begin{equation}\label{eq:definition_1_F_multi_Veff}
  F_{ij}(x,s) \triangleq \Hat{F}_{ij}(x,s) + \delta_{j \neq i+1} \Tilde{F}_{ij}(x,s)
\end{equation}


\end{proof}

\subsection{Proof of Theorem \ref{thm:Kalman_canonical_extended}}\label{APP_THM_KALMANN}
 Theorem \ref{thm:Kalman_canonical_extended} is an extension of the Kalman's decomposition theorem~\cite{dahleh_mit_2011}  
\begin{theorem}\label{thm:Kalmann_canonical_base}
    Let $A \in \mathbb{R}^{n \times n} $ and $B \in \mathbb{R}^{n \times m} $ be two matrices, and let $X$ be defined such that
    $$
    \dot{X}(t) = AX(t) + BU(t).
    $$
    Then, there exists an invertible matrix $M$ such that $Z(t) \triangleq M X(t)$ verifies
$$
\dot{Z}(t) = \left( \begin{array}{cc} \overline{A}_{11} & \overline{A}_{12} \\ 0 & \overline{A}_{22} \end{array} \right) Z(t) + \left( \begin{array}{c} \overline{B}_{11} \\ 0 \end{array} \right) U(t),
$$
with $\overline{A} = M A M^{-1}$, $\left( \begin{array}{c} \overline{B}_{11} \\ 0 \end{array} \right) = M B$, and $(\overline{A}_{11} , \overline{B}_{11}) $ controllable. 
\end{theorem}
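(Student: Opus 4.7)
The plan is to construct the change-of-basis matrix $M$ by choosing a basis of $\mathbb{R}^n$ that cleanly separates the controllable subspace of $(A,B)$ from its complement. This is the classical Kalman canonical form; the authors likely restate it here as a building block for the proof of Theorem~\ref{thm:Kalman_canonical_extended}, which should be obtained by iterating this single-input-block decomposition down the column structure of $B$.

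First I would define the controllable subspace
\[
V \triangleq \mathrm{Im}\bigl([B,~AB,~A^2 B,~\ldots,~A^{n-1}B]\bigr) \subseteq \mathbb{R}^n.
\]
Two structural properties follow immediately: $V$ is $A$-invariant, because $A(A^k B) = A^{k+1} B$ stays in $V$ for $k \leq n-2$ and the Cayley--Hamilton identity absorbs the $A^n B$ case into earlier powers; and $\mathrm{Im}(B) \subseteq V$ by taking $k=0$. Let $r \triangleq \dim V$, pick a basis $\{v_1,\ldots,v_r\}$ of $V$, extend it to a basis $\{v_1,\ldots,v_n\}$ of $\mathbb{R}^n$, and set $M \triangleq [v_1 \mid \cdots \mid v_n]^{-1}$.

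The block-triangular structure is then a column-wise verification in the new basis. For $j \leq r$, $v_j \in V$ and $A$-invariance gives $A v_j \in V$, so $M A v_j$ has zero entries in positions $r+1,\ldots,n$, which forces the lower-left block of $\bar A = M A M^{-1}$ to vanish. Similarly every column of $B$ lies in $\mathrm{Im}(B) \subseteq V$, so $MB$ has vanishing last $n-r$ rows, taking the form $\binom{\bar B_{11}}{0}$. Controllability of $(\bar A_{11}, \bar B_{11})$ is then a short rank argument: the controllability matrix of the reduced pair coincides with the first $r$ rows of $M \cdot [B, AB, \ldots, A^{n-1}B]$, whose range equals $M V = \mathbb{R}^r \times \{0\}$ by construction; projecting onto the first $r$ coordinates fills $\mathbb{R}^r$, so the reduced controllability matrix has rank $r$.

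I do not expect any significant obstacle: the entire argument rests on the $A$-invariance of $V$, and the rest is linear-algebra bookkeeping tracking how the columns of $A$ and $B$ transform under the chosen basis. The only subtlety worth double-checking is the last step, where one must remember that $\bar A^k \bar B$ has vanishing lower block thanks to the already-established triangular form, so that its upper block is unambiguously $\bar A_{11}^k \bar B_{11}$ and the rank-$r$ conclusion transfers cleanly to the reduced pair.
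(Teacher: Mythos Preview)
Your argument is the standard textbook proof of the Kalman controllability decomposition and is correct. Note, however, that the paper does not actually prove this statement: it is quoted as a classical result with a reference to~\cite{dahleh_mit_2011}, and is used only as an off-the-shelf lemma in the inductive proof of Theorem~\ref{thm:Kalman_canonical_extended}. Your reconstruction matches what one finds in that reference, so there is nothing to compare against in the paper itself.
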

We now prove the extension presented in Theorem \ref{thm:Kalman_canonical_extended}.

\begin{proof}
We proceed by induction on $m$. For $m=1$, Theorem \ref{thm:Kalman_canonical_extended} is true without any change of variable. Suppose now that the property holds for any $m' < m$. For the sake of clarity, we assume that $X$ follows the deterministic and non-delayed dynamic
$$
\dot{X}(t) = A X(t) + \sum_{i=1}^m B_i U_i(t),
$$
with $(A, (B_1, ... , B_m) )$ controllable. Kalman's decomposition theorem \ref{thm:Kalmann_canonical_base} applied to matrices $A$ and $B_1$ states that there exists a matrix $M_1$ such that $Z^{(1)} \triangleq M_1 X$ verifies
\small
\begin{align*}
& \dot{Z}^{(1)}(t)  = \left( \begin{array}{c | c} \overline{A}^{(1)}_{11} & \left( \overline{A}^{(1)}_{12}, ..., \overline{A}^{(1)}_{1m}  \right) \\ \hline \left( \begin{array}{c} 0 \\ \vdots \\ 0 \end{array} \right) & \overline{A}^{(1)}_2 \end{array} \right) \left( \begin{array}{c} Z^{(1)}_1(t) \\ \hline  Z^{(1)}_2(t) \\ \vdots \\ Z^{(1)}_m(t) \end{array} \right) \\
& \ +  \left( \begin{array}{c | c} \overline{B}^{(1)}_{11} & \left( \overline{B}^{(1)}_{12}, ..., \overline{B}^{(1)}_{1m}  \right) \\ \hline \left( \begin{array}{c} 0 \\ \vdots \\ 0 \end{array} \right) & \overline{B}^{(1)}_2 \end{array} \right) \left( \begin{array}{c} U_1(t) \\ \hline  U_2(t) \\ \vdots \\ U_m(t) \end{array} \right),
\end{align*}
\normalsize
with $(\overline{A}^{(1)}_{11}, \overline{B}^{(1)}_{11}) $ controllable. The change of variable is invertible
, and $X$ can be steered to any state, therefore the substate $\left( \begin{array}{c} Z^{(1)}_2(t) \\ \vdots \\ Z^{(1)}_m(t) \end{array} \right)$ can also be steered to any state. This implies that the pair $( \overline{A}^{(1)}_2, \overline{B}^{(1)}_2)$ is controllable. We can apply our induction hypothesis to the dynamics of the substate with $m' = m-1$, and obtain a invertible matrix $M_2$ such that, if $\left( \begin{array}{c} Z^{(2)}_2(t) \\ \vdots \\ Z^{(2)}_m(t) \end{array} \right) \triangleq M_2 \left( \begin{array}{c} Z^{(1)}_2(t) \\ \vdots \\ Z^{(1)}_m(t) \end{array} \right)$, then
\begin{align*}
    \left( \begin{array}{c} \dot{Z}^{(2)}_2(t) \\ \vdots \\ \dot{Z}^{(2)}_m(t) \end{array} \right) & = \left( \begin{array}{ccc} \overline{A}_{22} & &  \overline{A}_{ij} \\  & \ddots & \\ (0) & & \overline{A}_{mm} \end{array} \right)  \left( \begin{array}{c} Z^{(2)}_2(t) \\ \vdots \\ Z^{(2)}_m(t) \end{array} \right) \\ 
    & + \left( \begin{array}{ccc} \overline{B}_{22} & & \overline{B}_{ij} \\  & \ddots & \\ (0) & & \overline{B}_{mm} \end{array} \right) \left( \begin{array}{c} U_2(t) \\ \vdots \\ U_m(t) \end{array} \right),
\end{align*}
with the pairs $(\overline{A}_{ii},\overline{B}_{ii})$ controllable for all $i \in [\![2,m]\!] $. Therefore, by setting 
$$
M \triangleq M_1 \left( \begin{array}{c | c}  1 & 0_{1 \times m} \\ \hline 0_{m \times 1} & M_2  \end{array} \right)
$$
we obtain a invertible change of variable such that $Z \triangleq M X$ verifies the dynamic 
\begin{align*}
    \left( \begin{array}{c} \dot{Z}_1(t) \\ \vdots \\ \dot{Z}_m(t) \end{array} \right) & = \left( \begin{array}{ccc} \overline{A}_{11} & &  \overline{A}_{ij} \\  & \ddots & \\ (0) & & \overline{A}_{mm} \end{array} \right)  \left( \begin{array}{c} Z_1(t) \\ \vdots \\ Z_m(t) \end{array} \right) \\ 
    & + \left( \begin{array}{ccc} \overline{B}_{11} & & \overline{B}_{ij} \\  & \ddots & \\ (0) & & \overline{B}_{mm} \end{array} \right) \left( \begin{array}{c} U_1(t) \\ \vdots \\ U_m(t) \end{array} \right),
\end{align*}
with the pairs $(\overline{A}_{ii},\overline{B}_{ii})$ controllable for all $i \in [\![1,m]\!] $. 
We can apply the transformation to the delayed stochastic dynamic given by \eqref{eq:equivalent_delayed_SDE} and obtain equation \eqref{eq:multi_input_delayed_SDE_kalman_form}.
By induction, this proves the extended Kalman decomposition for all $m$.
\end{proof}

\subsection{Proof of Lemma \ref{lem:link_substate_cropped_predictor_tilde_Yi}}\label{APP_LEM_LINK_SUBSTATE}
\begin{proof}
Let us re-write the dynamic of the substate $Z_i$ as
$$
dZ_i(t) = \left( \overline{A}_{ii} Z_i(t) + \overline{B}_{ii} U_i(t-h_i) + d_i(t) \right) dt + \overline{\sigma}_i(t) dW_t.
$$
with $d_i(t) \triangleq \sum_{j > i} \overline{A}_{ij} Z_j(t) + \sum_{j > i} \overline{B}_{ij} U_j(t-h_j) + \overline{r}_i(t)$. From \cite{velho_mean-covariance_2023} we have that
\begin{equation}\label{eq:appendix_link_artstein_original_substate_Yi}
\begin{split}
& Z_i(t) = e^{ \overline{A}_{ii} h_i } Y_i(t-h_i) + \int_{t-h_i}^t  e^{ \overline{A}_{ii} (t-s) } \overline{\sigma}_i(s) dW_s \\
& +  \int_{t-h_i}^t  e^{ \overline{A}_{ii} (t-s) } d_i(s) ds.
\end{split}
\end{equation}
Using the Itô formula, we obtain
$$
Y_i(t) = \Tilde{Y}_i(t) + \int_{0}^t e^{ \overline{A}_{ii} (t-s) } d_i(s) ds.
$$
By injecting the last formula into equation \eqref{eq:appendix_link_artstein_original_substate_Yi}, we obtain equation \eqref{eq:link_substate_cropped_predictor_tilde_Yi}.
\end{proof}

\subsection{Proof of Theorem \ref{thm:substate_separation_for_min_cov_induction}}\label{APP_THM_SEPARATION}
\begin{proof}
We proceed by induction, starting from the case $i=m$, where the substate $Z_m$ only has one delay and can be treated with existing results. For clarity, we set $h_{m+1} \triangleq 2h_m$, but the proof can be done for any set $h_{m+1} > h_{m}$.\\

\noindent\textbf{Initialization:}
Let $i=m$, equation \eqref{eq:link_substate_cropped_predictor_tilde_Yi} yields 
\begin{equation*}
\begin{split}
& Z_m(t) = e^{ \overline{A}_{mm} h_m } \Tilde{Y}_m(t-h_m) + \int_{t-h_m}^t  e^{ \overline{A}_{mm} (t-s) } \overline{\sigma}_m(s) dW_s \\
& +  \int_0^{t-2 h_m}  e^{ \overline{A}_{mm} (t-s) } \overline{r}_m(s) ds. \\
& +  \int_{t-2 h_m}^t  e^{ \overline{A}_{mm} (t-s) } \overline{r}_m(s) ds,
\end{split}
\end{equation*}
where $\overline{r}_m(t) = \int_{t-h_m}^t \overline{G}_m(t-s) \overline{\sigma}(s) dW_s.$
Adjusting the computations from the proof of \cite[Lemma 4]{velho_stabilization_2024}, we obtain 
\small
\begin{equation*}
\begin{split}
& Z_m(t) = e^{ \overline{A}_{mm} h_m } Y_m(t-h_m) + \int_{t-h_m}^t  e^{ \overline{A}_{mm} (t-s) } \overline{\sigma}_m(s) dW_s \\
& +  \int_0^{t-2 h_m}  e^{ \overline{A}_{mm} (t-s) } \overline{r}_m(s) ds. \\
& +  \int_{t-3 h_m}^t \left( \int_{\max(t-2h_m,\tau) }^{\min(t, \tau +h_m)} e^{ \overline{A}_{mm} (t-s) } \overline{G}_m(s-\tau) ds \right) \overline{\sigma}(\tau) dW_\tau.
\end{split}
\end{equation*}
\normalsize
This implies 
\begin{equation*}
\begin{split}
& Z_m(t) = e^{ \overline{A}_{mm} h_m } \Tilde{Y}_m(t-h_m) + R_m^{(U)}(t-h_{m+1})\\
& + \int_{t-h_{m+1}}^t \Gamma_m(t-\tau) \overline{\sigma}(\tau) dW_\tau .
\end{split}
\end{equation*}
where
\begin{equation*}
\begin{split}
 \Gamma_m(u) & \triangleq \int_{\max(u-h_{m+1},0) }^{\min(u, h_m)} e^{  \overline{A}_{mm}(u- s') } \overline{G}_m(s') ds' \\ 
& + \indic_{[0,h_m]}(u) \ e^{ \overline{A}_{mm} u } P_m,
\end{split}
\end{equation*}
and
\begin{equation*}
\begin{split}
R_m^{(U)}(t-h_{m+1}) & \triangleq  \int_0^{t-h_{m+1}}  e^{ \overline{A}_{mm} (t-s) } \overline{r}_m(s) ds \\
& + \int_{t-3h_m}^{t-h_{m+1}} \Gamma_m(t-\tau) \overline{\sigma}(\tau) dW_\tau,
\end{split}
\end{equation*}
which is the searched equality.\\

\noindent\textbf{Induction:} Let $i$ be in $\llbracket 1 , m-1 \rrbracket $ and assume equation \eqref{eq:link_substate_cropped_predictor_tilde_Yi} holds for all $j>i$. By taking equality \eqref{eq:appendix_link_artstein_original_substate_Yi} and injecting equation \eqref{eq:link_substate_cropped_predictor_tilde_Yi} we obtain
\small
\begin{equation*}
\begin{split}
& Z_i(t) = e^{ \overline{A}_{ii} h_i } \Tilde{Y}_i(t-h_i) + \int_{t-h_i}^t  e^{ \overline{A}_{ii} (t-s) } \overline{\sigma}_i(s) dW_s \\
& +  \int_{0}^t  e^{ \overline{A}_{ii} (t-s) } \left( \sum_{j=i+1}^m \overline{A}_{ij}  U_j(s-h_j) \right) ds \\
& +  \int_{0}^t  e^{ \overline{A}_{ii} (t-s) } \overline{r}_i(s) ds \\
& +  \int_{0}^t  e^{ \overline{A}_{ii} (t-s) }\Biggl( \sum_{j=i+1}^m \overline{A}_{ij} \biggl[  e^{ \overline{A}_{jj} h_j } Y_i(s-h_j) \\
& \hspace{12em} + R^{(U)}_j(s-h_{j+1}) \biggr] \Biggr) ds \\
& + \int_{0}^{t-h_{i+1}}  e^{ \overline{A}_{ii} (t-s) } \left( \sum_{j=i+1}^m \overline{A}_{ij}  \int_{s-h_{j+1}}^{s} \Gamma_j(s-\tau) \overline{\sigma}(\tau) dW_\tau \right) ds \\
& +  \int_{t-h_{i+1}}^t  e^{ \overline{A}_{ii} (t-s) } \left( \sum_{j=i+1}^m \overline{A}_{ij}  \int_{s-h_{j+1}}^{s} \Gamma_j(s-\tau) \overline{\sigma}(\tau) dW_\tau \right) ds
\end{split}
\end{equation*}
\normalsize
Let us focus on the last double integral. Using the stochastic Fubini's theorem we obtain 
\small
\begin{equation*}
\begin{split}
& \int_{t-h_{i+1}}^t  e^{ \overline{A}_{ii} (t-s) } \left( \sum_{j=i+1}^m \overline{A}_{ij}  \int_{s-h_{j+1}}^{s} \Gamma_j(s-\tau) \overline{\sigma}(\tau) dW_\tau \right) ds \\
& = \sum_{j=i+1}^m \int_{t-h_{i+1}-h_{j+1}}^t \\
& \hspace{4em} \left( \int_{\max(t-h_{i+1},\tau )}^{\min(t,\tau+h_{j+1})}  e^{ \overline{A}_{ii} (t-s) }  \overline{A}_{ij}  \Gamma_j(s-\tau)  ds  \right) \overline{\sigma}(\tau) dW_\tau.
\end{split}
\end{equation*}
\normalsize
Finally, using the change of variable $s' = s - \tau$, we have
\begin{equation*}
\begin{split}
& \sum_{j=i+1}^m \int_{t-h_{i+1}-h_{j+1}}^t \\
& \hspace{2em}\left( \int_{\max(t-h_{i+1},\tau )}^{\min(t,\tau+h_{j+1})}  e^{ \overline{A}_{ii} (t-s) }  \overline{A}_{ij}  \Gamma_j(s-\tau)  ds  \right) \overline{\sigma}(\tau) dW_\tau \\
& = \sum_{j=i+1}^m \int_{t-h_{i+1}-h_{j+1}}^t \Theta_{i,j}(t-\tau) \overline{\sigma}(\tau) dW_\tau.
\end{split}
\end{equation*}
with
$$
\Theta_{i,j}(u) \triangleq \int_{\max(u-h_{i+1},0 )}^{\min(u,h_{j+1})}  e^{ \overline{A}_{ii} (u-s') }  \overline{A}_{ij}  \Gamma_j(s')  ds' 
$$
By writing the integral $\int_{t-h_i}^t  e^{ \overline{A}_{ii} (t-s) } \overline{r}_i(s) ds$ as a stochastic integral (as done in the initialization), we obtain
\begin{equation*}
\begin{split}
Z_i(t) & = e^{ \overline{A}_{ii} h_i } \Tilde{Y}_i(t-h_i) + R^{(U)}_i(t-h_{i+1}) \\
    & + \int_{t-h_{i+1}}^t \Gamma_i(t-s) \overline{\sigma}(s) dW_s,
\end{split}
\end{equation*}
with
\small
\begin{equation*}
\begin{split}
& R^{(U)}_i(t-h_{i+1}) \triangleq \sum_{j=i+1}^m \int_{0}^t  e^{ \overline{A}_{ii} (t-s) } \overline{A}_{ij}  U_j(s-h_j) ds \\
& + \int_{t-h_i-h_m}^{t-h_{i+1}} \left( \int_{\max(t-h_{i+1},\tau) }^{\min(t, \tau +h_m)} e^{ \overline{A}_{ii} (t-s) } \overline{G}_i(s-\tau) ds \right) \overline{\sigma}(\tau) dW_\tau   \\
& +  \int_{0}^{t-h_{i+1}}  e^{ \overline{A}_{ii} (t-s) } \overline{r}_i(s) ds \\
& + \sum_{j=i+1}^m \int_{0}^t  e^{ \overline{A}_{ii} (t-s) } \overline{A}_{ij} \left[  e^{ \overline{A}_{jj} h_j } Y_i(s-h_j) + R^{(U)}_j(s-h_{j+1}) \right] ds \\
& + \sum_{j=i+1}^m \int_{t-h_i-h_{j+1}}^{t-h_{i+1}} \Theta_{i,j}(t-\tau) \overline{\sigma}(\tau) dW_\tau.
\end{split}
\end{equation*}
\normalsize
and
\begin{equation*}
\begin{split}
 \Gamma_i(u) & \triangleq \int_{\max(u-h_{i+1},0) }^{\min(u, h_m)} e^{ \overline{A}_{ii} (u-s) } \overline{G}_i(u) ds \\
& + \indic_{[0,h_i]}(u) \ e^{ \overline{A}_{ii} u } P_i + \sum_{j=i+1}^m \Theta_{i,j}(u),
\end{split}
\end{equation*}
which concludes the proof.

\end{proof}

\subsection{Proof of Theorem \ref{thm:controllability_above_minimal_covariance_bound}}\label{APP_THM_CONTROLABILITY}
\begin{proof}
Starting from time $t = T - h_{i+1} $, $\Tilde{Y}_i(t)$ can compensate $R_i^{(U)}(T-h_{i+1})$ as it becomes $\mathcal{F}_{t}$ measurable. We then apply the same reasoning as in \cite[section II.C]{velho_mean-covariance_2023} to get rid of the deterministic drift. We use the control
$$
U_i(t) = U_{i,Y}(t) + U_{i,R}(t),
$$
with
\begin{align*}
&U_{i,R}(t)  = - \Tilde{B}_i^T e^{\overline{A}^T_{ii}(T-h_i-t)} \Bigl( G_{T-h_{i+1}}^{T-h_i} \Bigr)^{-1} \\
&  \times  \Bigl( R_i^{(U)}(T-h_{i+1}) + e^{\overline{A}_{ii}(h_{i+1}-h_i)} \Tilde{Y}(T-h_{i+1}) \Bigr),
\end{align*}
where $G_{T-h_{i+1}}^{T-h_i} \triangleq \int_{T-h_{i+1}}^{T-h_i} e^{\overline{A}_{ii}(T-h_i-t)} \Tilde{B}_i \Tilde{B}_i^T e^{\overline{A}^T_{ii}(T-h_i-t)} dt$ is the grammian of the system, which is invertible by controllability of the matrices $(\overline{A}_{ii}, \Tilde{B}_i)$ \cite{velho_mean-covariance_2023}. This control is $\mathcal{F}_t$-measurable for $t \in [T-h_{i+1}, T-h_{i}]$. We then obtain get
\begin{align*}
Y(T-h_i) & = e^{\overline{A}_{ii}(h_{i+1}-h_i)} \Tilde{Y}(T-h_{i+1}) \\
& + \int_{T-h_{i+1}}^{T-h_i} e^{\overline{A}_{ii}(T-h_i-t)} \sigma(t) dW_t \\
& +  \int_{T-h_{i+1}}^{T-h_i} e^{\overline{A}_{ii}(T-h_i-t)} (U_{i,Y}(t) + U_{i,R}(t) ) dt,
\end{align*}
which yields
$$Z_i(T) = e^{\overline{A}_{ii} h_i} \hat{Y}_i(T-h_i) +  \int_{T-h_{i+1}}^T \Gamma_i(T-s) \overline{\sigma}(s) dW_s,$$
with $\hat{Y}_i$ following the dynamic
$$
\left\{
    \begin{array}{ll}
    d\hat{Y}_i(t) &= \hspace{1em} \left( \overline{A}_{ii} \hat{Y}_i(t) + \Tilde{B}_i U_{i,Y}(t) \right) dt + \overline{\sigma}_i(t)  dW_t, \\
    \hat{Y}_i(0) &= \hspace{1em} 0.
    \end{array}
    \right.
$$
This means we effectively canceled the term $ R_i^{(U)}(T-h_{i+1})$ as well as the initial condition. The same can be done with the stochastic integral $ \int_{T-h_{i+1}}^{T-h_i-\eta} \Gamma_i(T-s) \overline{\sigma}(s) dW_s$ for any $\eta > 0$. The rest of the proof directly follows from classical theorems of stochastic controllability \cite[Theorem 13]{mahmudov_controllability_2001} that gives us the controllability in mean and variance of $\hat{Y}_i$ .
\end{proof}

\subsection{Proof of Theorem \ref{thm:stabilization_system_feedback}}\label{APP_THM_STABILIZATION}
\begin{proof}
The stabilization of $(X, u ,v )$ in the sense of \eqref{eq:stabilization_mean_expectation} follows directly from the stabilization of $(Z, \alpha , \beta )$ as the changes of variables are linear and bounded reversible. We therefore first focus on showing by induction that, for $U_i = - K_i Y_i(t)$ as defined in the Theorem \ref{thm:stabilization_system_feedback}, the following result holds for $t> h_m$, $1 \leq i \leq m$
\begin{equation}\label{eq:appendix_induction_hypo_stabilization_Z}
\Vert \espE[ Z_i(t) ] \Vert \leq C e^{- \nu t} \Vert Y(0) \Vert, \quad  \espE \left[ \Vert Z_i(t) \Vert^2 \right] \leq C  
\end{equation}
where $\nu$ can be prescribed through the gains $K_i$ and $C$ is a possibly overloaded positive constant. The stabilization of $\alpha$ and $\beta$ follow as they can be written explicitly in terms of the control $U_i$ as in \eqref{eq:induction_hypo_lemma_tracking_delayed_SDE_beta}.

\textbf{Initialization:} The initialization can be directly deducted from the scalar one delay case in \cite{velho_stabilization_2024}[Theorem 3].

\textbf{Induction: } Let $1 \leq i \leq m-1$, we suppose that \eqref{eq:appendix_induction_hypo_stabilization_Z} is true for all $j > i$. With the controller $U_i(t) = - K_i Y_i(t)$, by denoting $H_i \triangleq A_{ii} - \Tilde{B}_{ii} K_i$, we can rewrite $Z_i(t)$ as
\begin{align*}
& Z_i(t) = e^{ \overline{A}_{ii} h_i } Y_i(t-h_i) + \int_{t-h_i}^t  e^{ \overline{A}_{ii} (t-s) } \overline{\sigma}_i(s) dW_s \\
& + \int_{t-h_i}^t e^{ \overline{A}_{ii} (t-s) } \overline{r}_i(s) ds \\
& + \int_{t-h_i}^t e^{ \overline{A}_{ii} (t-s) } \left( \sum_{j > i} \overline{A}_{ij} Z_j(s) + \sum_{j > i} \overline{B}_{ij} U_j(s-h_j) \right)ds,
\end{align*}
with 
\small
\begin{align*}
& Y_i(t) = e^{H_i t } Y_i(0) + \int_{0}^t  e^{ H_{i} (t-s) } \overline{\sigma}_i(s) dW_s \\
& + \int_{0}^t e^{ H_{i} (t-s) } \left( \sum_{j > i} \overline{A}_{ij} Z_j(s) + \sum_{j > i} \overline{B}_{ij} U_j(s-h_j) + \overline{r}_i(s) \right)ds
\end{align*}
\normalsize
We recall that the mean of a stochastic integral is null and therefore the mean of $r(t)$ is zero. We can deduce from this and the induction hypothesis \eqref{eq:appendix_induction_hypo_stabilization_Z} that
\begin{align*}
& \Vert \espE [ Z_i(t) ] \Vert \leq C \Vert \espE [ Y_i(t-h_i) ] \Vert + C e^{- \nu t}\Vert Y(0) \Vert.
\end{align*}
With the same arguments combined with the expression of $Y$ and the fact that $H_i$ is Hurwitz, we have that
\begin{align*}
& \Vert \espE [ Y_i(t) ] \Vert \leq C e^{- \nu t}\Vert Y(0) \Vert.
\end{align*}
Similarly, since the variance can be bounded by the sum of the variances (up to a constant), we have that 
\begin{align*}
& \espE \left[ \Vert Z_i(t) \Vert^2 \right] \leq C \espE \left[ \Vert Y_i(t-h_i) \Vert^2 \right] + C.
\end{align*}
Since the drift in the dynamic of $Y_i$, composed of the terms $r_i$, $U_j$ and $Z_j$ for $j>i$, has a bounded variance, we can use similar arguments as in the scalar one delay case to bound the variance of the predictor $Y_i$ with the feedback controller. More details can be found in the proof of \cite{velho_stabilization_2024}[Theorem 3].
\end{proof}

\subsection{Proof of Theorem \ref{thm:sequence_of_controls_to_minimal_variance}}\label{APP_THM_SEQUENCE_TO_MINIMAL}
\begin{proof}
To obtain the needed sequence of controls, we prove the following lemma by induction

\begin{lemma}\label{lem:lemma_appendix_high_gain_controller}
    Let $(K_i)_{1 \leq i \leq m}$ be $m$ positive real numbers. There exists an adapted controller $U$ such that the state $Z$ associated verifies for all $1 \leq i \leq m$
    \small
    \begin{equation}\label{eq:high_gain_controller_induction_lemma_appendix}
    \begin{split}
        Z_i(t) & = e^{\overline{A}_{ii} h_i } \left( e^{- K_i (t-h_i)} Y_i(0) + \int_0^{t-h_i} e^{-K_i(t-h_i-s)} \Hat{\sigma}_i(s) dW_s \right) \\
        & + \int_{t-h_i}^t \Gamma_i(t-s) \overline{\sigma}(s) dW_s
    \end{split}
    \end{equation}
    \normalsize
    with $\Hat{\sigma}_i \in L^\infty([0,T] ; \mathbb{R})$.
\end{lemma}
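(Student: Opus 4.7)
I would proceed by downward induction on $i$, from $i=m$ down to $i=1$, using Assumption~\ref{asm:assumption_n_equal_m} to design at each level an exact high-gain feedback acting through the Artstein predictor $Y_i$. The base case $i=m$ presents no cross-coupling (no indices $j>m$) and reduces to a single-delay SDE; the scalar one-delay construction of~\cite[Theorem 3]{velho_stabilization_2024} directly yields~\eqref{eq:high_gain_controller_induction_lemma_appendix} for $i=m$ with $\Hat{\sigma}_m=\overline{\sigma}_m\in L^\infty$.

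For the inductive step, assume~\eqref{eq:high_gain_controller_induction_lemma_appendix} holds for every $j>i$. Then $Y_i(t)$, each $Z_j(t)$ with $j>i$, each delayed input $U_j(t-h_j)$, and $\overline{r}_i(t)=\int_{t-h_m}^t \overline{G}_i(t-s)\overline{\sigma}(s)\,dW_s$ are all $\mathcal{F}_t$-measurable. Exploiting that $\Tilde{B}_{ii}=e^{-\overline{A}_{ii}h_i}\overline{B}_{ii}\neq 0$ under Assumption~\ref{asm:assumption_n_equal_m}, I would define the adapted feedback
\begin{equation*}
U_i(t)=-\frac{1}{\Tilde{B}_{ii}}\left[(\overline{A}_{ii}+K_i)Y_i(t)+\sum_{j>i}\overline{A}_{ij}Z_j(t)+\sum_{j>i}\overline{B}_{ij}U_j(t-h_j)+\overline{r}_i(t)\right].
\end{equation*}
Substituted into the Artstein SDE~\eqref{eq:SDE_Artstein_predictor_substate_Zi}, this choice collapses every drift in the $Y_i$ dynamics to $-K_i Y_i(t)$, yielding
\begin{equation*}
Y_i(t)=e^{-K_i t}Y_i(0)+\int_0^t e^{-K_i(t-s)}\overline{\sigma}_i(s)\,dW_s,
\end{equation*}
which matches the exponentially-decaying bracket of~\eqref{eq:high_gain_controller_induction_lemma_appendix} with $\Hat{\sigma}_i=\overline{\sigma}_i\in L^\infty$.

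It remains to recover the stochastic integral with kernel $\Gamma_i$. I would insert the explicit $Y_i(t-h_i)$ just obtained into the identity~\eqref{eq:appendix_link_artstein_original_substate_Yi} linking $Z_i$ to $Y_i$, and then substitute the inductive expressions for $Z_j$ and $U_j$ ($j>i$) inside the residual drift $\int_{t-h_i}^t e^{\overline{A}_{ii}(t-s)} d_i(s)\,ds$ of that identity. An application of the stochastic Fubini theorem~\cite[Ch.~4, Thm.~45]{protter_general_2005} together with the same changes of variable $s'=s-\tau$ used in the proof of Theorem~\ref{thm:substate_separation_for_min_cov_induction} reorganises all the resulting stochastic convolutions on $[t-h_i,t]$ (those stemming from $\overline{\sigma}_i$, from $\overline{r}_i$, and from the inductive $\Gamma_j$-kernels) into the single kernel $\Gamma_i$ of~\eqref{eq:expression_induction_Gamma_i}, closing the induction.

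The delicate step, and the main obstacle, is precisely this last rearrangement. The high-gain feedback kills the drift of $Y_i$, yet a copy of $d_i$ persists inside the residual integral in~\eqref{eq:appendix_link_artstein_original_substate_Yi}. One must therefore verify that, once the inductive form of $Z_j$ and $U_j$ is plugged in, the reachable term $R_i^{(U)}(t-h_{i+1})$ and the excess stochastic integral on $[t-h_{i+1},t-h_i]$ appearing in Theorem~\ref{thm:substate_separation_for_min_cov_induction} both vanish identically, leaving precisely the minimal kernel $\Gamma_i$ active on $[t-h_i,t]$. This is essentially the same combinatorial matching as in the proof of Theorem~\ref{thm:substate_separation_for_min_cov_induction}, but now driven by the explicit exponentially-decaying inductive formulas rather than by an abstract decomposition of the drift.
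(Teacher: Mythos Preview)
Your feedback $U_i(t)=-\Tilde{B}_{ii}^{-1}\bigl[(\overline{A}_{ii}+K_i)Y_i(t)+d_i(t)\bigr]$ does produce a clean exponential for $Y_i$, but the claim that the residual in~\eqref{eq:appendix_link_artstein_original_substate_Yi} then collapses to $\int_{t-h_i}^t\Gamma_i(t-s)\overline{\sigma}(s)\,dW_s$ is incorrect, already at $i=m$. There the residual is $\int_{t-h_m}^t e^{\overline{A}_{mm}(t-s)}\overline{\sigma}_m(s)\,dW_s+\int_{t-h_m}^t e^{\overline{A}_{mm}(t-s)}\overline{r}_m(s)\,ds$, and after the stochastic Fubini step the second piece is a stochastic integral over $[t-2h_m,t]$, not $[t-h_m,t]$. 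On $[t-h_m,t]$ the two pieces do recombine into $\Gamma_m$, but the excess on $[t-2h_m,t-h_m]$ carries a nonzero, $K_m$-independent kernel and therefore cannot be written as $e^{\overline{A}_{mm}h_m}\int_0^{t-h_m}e^{-K_m(t-h_m-s)}\Hat\sigma_m(s)\,dW_s$ for any choice of $\Hat\sigma_m$. In the inductive step the gap widens: the residual $\int_{t-h_i}^t e^{\overline{A}_{ii}(t-s)}d_i(s)\,ds$ still contains $\sum_{j>i}\overline{A}_{ij}Z_j(s)$ and $\sum_{j>i}\overline{B}_{ij}U_j(s-h_j)$ for $s\in[t-h_i,t]$, and by the induction hypothesis these carry the $K_j$-dependent brackets $e^{\overline{A}_{jj}h_j}\bigl(e^{-K_j(s-h_j)}Y_j(0)+\ldots\bigr)$, which neither vanish nor fit the $\Gamma_i$-kernel.

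The paper resolves this with a two-stage design. It first works at the level of the $Z_i$-dynamics: a piece of $U_i(t-h_i)$ cancels the $\mathcal{F}_{t-h_j}$-measurable parts of $Z_j(t)$ and of $U_j(t-h_j)$ directly in $dZ_i$ (admissible since $h_j>h_i$), leaving only the uncontrollable integrals $\int_{t-h_j}^t\Gamma_j\,dW$ as cross-coupling. Only then does it form the Artstein predictor $\Hat Y_i$, and it must still shift to a modified predictor $\overline{Y}_i=\Hat Y_i+\int g_i\,dW$ that absorbs the leftover noise on $[t-2h_m,t-h_i]$ before applying the high-gain feedback. The diffusion of $\overline{Y}_i$ is $\overline{\sigma}_i+g_i(0)\overline{\sigma}$, so $\Hat\sigma_i\neq\overline\sigma_i$ in general, and the drift to be cancelled by the feedback is a modified $\Tilde r_i$, not $\overline r_i$. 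This passage to $\overline{Y}_i$ is the missing idea in your argument.
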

We can then prove that as the gains $K_i$ go to infinity, the weighted variance of the state goes to the minimal variance.
\begin{proof}[Proof of Lemma \ref{lem:lemma_appendix_high_gain_controller}]

\ \\ \noindent\textbf{Initialization:} We have
\small
\begin{equation*}
\begin{split}
& Z_m(t) = e^{ \overline{A}_{mm} h_m } Y_m(t-h_m) + \int_{t-h_m}^t  e^{ \overline{A}_{mm} (t-s) } \overline{\sigma}_m(s) dW_s \\
& +  \int_{t-2 h_m}^t \left( \int_{\max(t-h_m,\tau) }^{\min(t, \tau +h_m)} e^{ \overline{A}_{mm} (t-s) } \overline{G}_m(s-\tau) ds \right) \overline{\sigma}(\tau) dW_\tau .
\end{split}
\end{equation*}
\normalsize
Defining
$$
g_m(u) \triangleq e^{- \overline{A}_{mm} h_m} \int_{u}^{h_m} e^{ \overline{A}_{mm} (u-s') } \overline{G}_m(s') ds',
$$
and
\begin{equation*}
\begin{split}
& \overline{Y}_m(t) \triangleq  Y_m(t)  + \int_{t- h_m}^t g_m(t-s) \overline{\sigma}(s) dW_s, \\
\end{split}
\end{equation*}
we obtain
$$
Z_m(t) = e^{ \overline{A}_{mm} h_m } \overline{Y}_m(t-h_m) + \int_{t-h_m}^t \Gamma_m(t-s) \overline{\sigma}(s) dW_s.
$$
Adjusting the proof of~\cite[Lemma 6]{velho_stabilization_2024} we have that $\overline{Y}_m$ verifies\begin{equation}\label{eq:SDE_overline_Y_change_var_artstein}
\left\{
    \begin{array}{ll}
    d\overline{Y}_m(t) &= \hspace{1em} \left( \overline{A}_{mm} \overline{Y}_m(t) + \Tilde{B}_m U_m(t) + \Tilde{r}_m(t)   \right) dt \\
    & +  (\overline{\sigma}_m(t) + g_m(0) \overline{\sigma} (t) ) dW_t, \\
    \overline{Y}_m(0) &= \hspace{1em} Y_m(0) ,
    \end{array}
    \right.
\end{equation}
with $\Tilde{r}_m(t) \triangleq  \Tilde{g}_m(t-s) \overline{\sigma}(s) dW_s $ and
$$
\Tilde{g}_m(u) \triangleq \overline{G}_m(u) + g_m'(u) - \overline{A}_{mm} g_m(u) .
$$
We can now set our controller to be 
$$
U_m(t) = - \Tilde{B}_m^{-1} (K_m + \overline{A}_{mm}) \overline{Y}_m(t) - \Tilde{B}_m^{-1} \Tilde{r}_m(t),
$$
and obtain
\small
$$
\overline{Y}_m(t) = e^{- K_m t } Y_m(0) + \int_0^t e^{- K_m (t-s) } (\overline{\sigma}_m(s) + g_m(0) \overline{\sigma} (s) ) dW_s,
$$
\normalsize
which yields equation \eqref{eq:high_gain_controller_induction_lemma_appendix} for $i = m$. 

\textbf{Induction:}

Let $i$ such that $1 \leq i \leq m-1$. We recall that
\begin{equation*}
\begin{split}
dZ_i(t) & = \left( \overline{A}_{ii} Z_i(t) + \overline{B}_{ii} U_i(t-h_i) \right)dt \\
& + \left( \sum_{j = i+1}^m \overline{A}_{ij} Z_j(t) + \sum_{j = i+1}^m \overline{B}_{ij} U_j(t-h_j) \right)dt \\
& + \overline{r}_i(t) dt + \overline{\sigma}_i(t) dW_t.
\end{split}
\end{equation*}
we can use the induction hypothesis \eqref{eq:high_gain_controller_induction_lemma_appendix} and inject the formula for $Z_j$ and obtain
\small
\begin{equation*}
\begin{split}
& dZ_i(t) = \left( \overline{A}_{ii} Z_i(t) + \overline{B}_{ii} U_i(t-h_i) + \sum_{j = i+1}^m \overline{B}_{ij} U_j(t-h_j) \right)dt \\
& +  \sum_{j = i+1}^m \overline{A}_{ij} \biggl(  e^{\overline{A}_{jj} h_j } e^{- K_j (t-h_j)} Y_j(0) \\ 
& \hspace{0.5em} + \int_0^{t-h_j} e^{-K_j(t-h_j-s)} \Hat{\sigma}_j(s) dW_s + \int_{t-h_j}^t \Gamma_j(t-s) \overline{\sigma}(s) dW_s            \biggr) \\
 & \hspace{0em}  + \overline{r}_i(t) dt + \overline{\sigma}_i(t) dW_t
\end{split}
\end{equation*}
\normalsize
We can cancel out most of the terms with the controller
\small
\begin{equation*}
\begin{split}
& U_i(t-h_i) = \Hat{U}_i(t-h_i) \\
& - \overline{B}_{ii}^{-1}  \sum_{j = i+1}^m  \biggl(\overline{B}_{ij} U_j(t-h_j) + \overline{A}_{ij} e^{\overline{A}_{jj} h_j } e^{- K_j (t-h_j)} Y_j(0) \\
&   \hspace{6em}    + \int_0^{t-h_j} e^{-K_j(t-h_j-s)} \Hat{\sigma}_j(s) dW_s   \biggr)
\end{split}
\end{equation*}
\normalsize
which gives the following dynamic for $Z_i$
\begin{equation*}
\begin{split}
& dZ_i(t)  = \left( \overline{A}_{ii} Z_i(t) + \overline{B}_{ii} \Hat{U}_i(t-h_i) + \overline{r}_i(t) \right)dt \\ 
& \hspace{1em} + \left( \sum_{j = i+1}^m \int_{t-h_j}^t \Gamma_j(t-s) \overline{\sigma}(s) dW_s\right)dt  + \overline{\sigma}_i(t) dW_t.
\end{split}
\end{equation*}
Using the Artstein predictor $\Hat{Y}_i$ with the control $\Hat{U}_i$, and the computations of the proof of Theorem \ref{thm:substate_separation_for_min_cov_induction} we can show that, there exists a deterministic $C^1$ function $g_i$ such that 
\begin{equation*}
\begin{split}
& Z_i(t)  =  e^{ \overline{A}_{ii} h_i } \left( \Hat{Y}_i(t-h_i) + \int_{t- 2 h_m}^{t-h_i} g_i(t-s) dW_s \right) \\
    & + \int_{t-h_i}^t \Gamma_i(t-s) \overline{\sigma}(s) dW_s
\end{split}
\end{equation*}
We can now use the same reasoning as in the initialization, defining 
$$
\overline{Y}_i = \Hat{Y}_i(t-h_i) + \int_{t- 2 h_m}^{t-h_i} g_i(t-s) dW_s
$$
and the associated control 
$$
\Hat{U}_i(t) = - \Tilde{B}_i^{-1} (K_i + \overline{A}_{ii}) \overline{Y}_i(t) - \Tilde{B}_i^{-1} \Tilde{r}_i(t),
$$
so that $\overline{Y}_i$ verifies
$$
\overline{Y}_i(t) = e^{- K_i t } Y_i(0) + \int_0^t e^{- K_i (t-s) } (\overline{\sigma}_i(s) + g_i(0) \overline{\sigma} (s) ) dW_s,
$$
which yields equation \eqref{eq:high_gain_controller_induction_lemma_appendix} and concludes our proof of the lemma.
\end{proof}

We can now show that for all $t$, the variance of $Z_i$ will go to the minimal variance as $K_i$ goes to $+ \infty$. In equation \eqref{eq:high_gain_controller_induction_lemma_appendix}, we note that the three terms are independent. This means that we the weighted variance of $Z_i$ can be separated as follows
\begin{equation*}
\begin{split}
& \espE[Z_i(t) Q_i(t) Z_i(t)] =   Q_i(t) \left( e^{\overline{A}_{ii} h_i } e^{- K_i (t-h_i)} Y_i(0) \right)^2 \\
& \quad +  Q_i(t) \int_0^{t-h_i} \left( e^{\overline{A}_{ii} h_i } e^{-K_i(t-h_i-s)} \Hat{\sigma}_i(s) \right)^2 ds \\
& \quad +  Q_i(t) \int_{t-h_i}^t \left(  \Gamma_i(t-s) \overline{\sigma}(s) \right)^2 ds.
\end{split}
\end{equation*}
We also note that the last integral is equal to the minimal weighted variance $V^{(i)}_{\min,Q}(t)$. Denoting $C$ a possibly overloaded positive constant, the integral of the variance can be bounded with the Itô isometry by
\begin{equation*}
\begin{split}
& \espE \left[ \int_{h_m}^T Z_i(t) Q_i(t) Z_i(t) \right] \leq   C \Vert Q \Vert_\infty | Y_i(0) |^2 \int_{h_m}^T e^{- 2 K_i t} dt\\
& \quad +  C \Vert Q \Vert_\infty \Vert \Hat{\sigma} \Vert_\infty^2  \int_{h_m}^T \left( \int_0^{t-h_i} e^{-2 K_i(t-h_i-s)} ds \right) dt\\
& \quad +  J_{\min,Q}^{(i)} \\
& \quad \leq   C \Vert Q \Vert_\infty | Y_i(0) |^2  \frac{e^{- 2 K_i h_m}}{K_i} + C \Vert Q \Vert_\infty \Vert \Hat{\sigma} \Vert_\infty^2 \frac{T}{K_i} +  J_{\min,Q}^{(i)}.
\end{split}
\end{equation*}
Therefore, if we chose a sequence of controls $U^{(n)}$ such that $K_i^{(n)}$ goes to zero, the associated state $Z_i^{(n)}$ weighted variance goes to $J_{\min,Q}^{(i)}$.

\end{proof}

\section{Supplementary materials}\label{A2}

\ 
Appendix for additional details on certain technical computations. For Authors and Reviewers only.
\

\subsection{Exchanging integrals in the proof of Lemma }\label{EXCHANGE_INT}

Let us define 
\small
\begin{align*}
    I_i & \triangleq \int_{t - \frac{1-x}{\mu_i}}^t \Biggl( \sum_{j=i+1}^m \omega_{i,j}(x + \mu_i(t-s)) \times \\
& \hspace{3em} \int_{s - \frac{1-x - \mu_i(t-s)}{\mu_j}}^s G_j \bigl( x + \mu_i(t-s) , s-\tau \bigr) \sigma(\tau) dW_\tau  \Biggr) ds 
\end{align*}
\normalsize
We want to use the stochastic Fubini theorem and write it as 
$$
I_i = \int_{t - \frac{1-x}{\mu_i}}^t G_i(x,t-\tau) \sigma(\tau) dW_\tau
$$
with $G_i$ a deterministic function.
We first note that in the stochastic integral, $\tau$ can take values in $[t-\frac{1-x}{\mu_i}, t ]$, we can therefore rewrite the integral as
\begin{align*}
    I_i & = \sum_{j=i+1}^m \int_{t - \frac{1-x}{\mu_i}}^t \Biggl( \int_{t - \frac{1-x}{\mu_i}}^t \indic_{\mathcal{S}_{ij}}(s,\tau) \omega_{i,j}(x + \mu_i(t-s)) \times \\
& \hspace{8em}  G_j \bigl( x + \mu_i(t-s) , s-\tau \bigr) \sigma(\tau) dW_\tau  \Biggr)  ds 
\end{align*}
where $\mathcal{S}_{ij} \subset \mathbb{R}^2$ is given by  
\begin{align*}
\mathcal{S}_{ij} & \triangleq \biggl \{ (s,\tau)  \  \vert \  s \in \left[t-\frac{1-x}{\mu_i}, t \right], \\
& \hspace{5em} \tau \in \left[s - \frac{1-x - \mu_i(t-s)}{\mu_j}, s \right]  \biggr \}.
\end{align*}
The bounds on $\tau$ are given by two affine functions of $s$,
which we can reverse to obtain the bounds of $s$ in terms of $\tau$. 
The bounds are given by
$$
\underline{\tau}(s) = \left(  1 - \frac{\mu_i}{\mu_j}  \right) s - \frac{1-x - \mu_i t}{\mu_j}
$$
and
$$
\overline{\tau}(s) = s
$$
The inverse is given by
$$
\underline{s}(\tau) = \tau
$$
and 
\begin{align*}
\overline{s}(\tau) & = \left( \tau + \frac{1-x - \mu_i t}{\mu_j} \right)  \left(  1 - \frac{\mu_i}{\mu_j}  \right)^{-1} \\
 & = \tau + \frac{1-x - \mu_i(t-\tau)}{\mu_j - \mu_i}.
\end{align*}
By taking into account that $s \leq t$, we can therefore write $\mathcal{S}_{ij}$ as 
\begin{align*}
\mathcal{S}_{ij} &  \triangleq \biggl \{ (s,\tau)  \  \vert \  \tau \in \left[t-\frac{1-x}{\mu_i}, t \right],  \\
& \hspace{4em} s \in \left[\tau, \min \left( \tau + \frac{1-x - \mu_i(t-\tau)}{\mu_j - \mu_i} , t \right) \right]  \biggr \}.
\end{align*}
Therefore, by setting $\overline{s}_{i,j}(x,u) \triangleq \min \left(  \frac{1-x-\mu_i u}{\mu_j - \mu_i} , u\right)$ and using the stochastic Fubini theorem 
, we have
\begin{align*}
    I_i & = \sum_{j=i+1}^m \int_{t - \frac{1-x}{\mu_i}}^t \Biggl( \int_{\tau}^{\tau + \overline{s}_{i,j}(x,t-\tau)} \omega_{i,j}(x + \mu_i(t-s)) \times \\
& \hspace{8em}  G_j \bigl( x + \mu_i(t-s) , s-\tau \bigr)   ds \Biggr)  \sigma(\tau) dW_\tau
\end{align*}
which concludes our proof with the change of variable $s' = \tau + s$ and with the following expression of $G_i$ 
\small
$$
G_i \triangleq \sum_{j=i+1}^m \int_0^{\overline{s}_{i,j}(x,u)} \omega_{i,j}\bigl (x + \mu_i (u-s') \bigr) G_j \bigl (x + \mu_i (u-s'), s' \bigr ) ds'.
$$
\normalsize

\end{document}